\def\blfootnote{\xdef\@thefnmark{}\@footnotetext}
\newtheorem{theorem}{Theorem}[section]
\newtheorem{lemma}[theorem]{Lemma}
\newtheorem{proposition}[theorem]{Proposition}
\theoremstyle{definition}
\newtheorem{remark}[theorem]{Remark}
\newtheorem*{definition*}{Definition}
\newcommand{\N}{\mathbb N}
\newcommand{\Z}{\mathbb Z}
\newcommand{\Q}{\mathbb Q}
\newcommand{\f}{\varphi}
\newcommand{\e}{\varepsilon }
\newcommand{\g}{\gamma }
\renewcommand{\geq}{\geqslant}
\renewcommand{\leq}{\leqslant}
\newcommand{\ed} {\end{document}}
\let\leq=\leqslant
\let\geq=\geqslant
\numberwithin{equation}{section}
\begin{document}
\title{On profinite groups with automorphisms\\[5pt] whose fixed points have countable  Engel sinks}

\author{E. I. Khukhro}
\address{Charlotte Scott Research Centre for Algebra, University of Lincoln, U.K., and \newline \indent  Sobolev Institute of Mathematics, Novosibirsk, 630090, Russia}
\email{khukhro@yahoo.co.uk}

\author{P. Shumyatsky}

\address{Department of Mathematics, University of Brasilia, DF~70910-900, Brazil}
\email{pavel@unb.br}

\keywords{Profinite groups; pro-$p$ groups;  Lie ring method; Engel condition; locally nilpotent groups}
\subjclass[2010]{20E18, 20E36, 20F19, 20F45, 22C05}

\begin{abstract}
An Engel sink of an element $g$ of a group $G$ is a set ${\mathscr E}(g)$ such that for every $x\in G$ all sufficiently long commutators $[...[[x,g],g],\dots ,g]$ belong to ${\mathscr E}(g)$.  (Thus, $g$ is an Engel element precisely when we can choose ${\mathscr E}(g)=\{ 1\}$.) It is proved that if a profinite group $G$ admits an elementary abelian group of automorphisms $A$ of coprime order $q^2$ for a prime $q$ such that for each $a\in A\setminus\{1\}$ every element of the centralizer $C_G(a)$ has a countable (or finite) Engel sink, then $G$ has a finite normal subgroup $N$ such that $G/N$ is locally nilpotent.
\end{abstract}
\maketitle

\section{Introduction}
It is well known that if a finite group $G$ admits a non-cyclic abelian group of automorphisms $A$ of coprime order, then $G=\langle C_G(a)\mid a\in A^\#\rangle$, where $A^\#= A\setminus \{1\}$ (see, for example, \cite[Theorem~6.2.4]{gor}). Therefore many properties of the group $G$ can be derived from the properties of the centralizers $C_G(a)$. For example, we proved in \cite{khu-shu99} that the exponent of $G$ is bounded in terms of the exponents of the $C_G(a)$ and $|A|$. In most cases it is natural to assume that $A$ is an elementary abelian group of order $q^2$ for a prime $q$.

Results of this kind have also been recently extended to profinite groups admitting a non-cyclic abelian finite group acting by coprime automorphisms \cite{acc-shu16,
acc-shu-sil18,
acc-shu-sil19,
acc-sil18,
acc-sil20}. In particular,
Acciarri, Shumyatsky, and Silveira \cite{acc-shu-sil18} proved the following.

\begin{theorem}[{\cite[Theorem~B2]{acc-shu-sil18}}]\label{t-ass}
If a profinite group $G$ admits an elementary abelian $q$-group $A$ of order $q^2$ acting by coprime automorphisms on $G$ in such a way that all elements in $C_G(a)$  are Engel in $G$ for every $a\in A^\#$, then $G$ is
locally nilpotent.
\end{theorem}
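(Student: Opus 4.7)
The plan is to reduce the problem, via Wilson--Zelmanov, to the (apparently stronger) assumption that each centralizer $C_G(a)$ is locally nilpotent, and then to analyze the resulting coprime-action situation by a pro-$p$ reduction followed by Lie-ring methods and Zelmanov's theorem. For any $a \in A^\#$ and $x \in C_G(a)$, the Engel-in-$G$ hypothesis $[\dots[[y,x],x],\dots,x]=1$ (with $n=n(x,y)$ commutators) restricted to $y\in C_G(a)$ says that $x$ is Engel in $C_G(a)$. Hence $C_G(a)$ is a profinite group in which every element is Engel, so by the Wilson--Zelmanov theorem it is locally nilpotent, and the task reduces to showing that a profinite group $G$ admitting a coprime $A\cong(\Z/q\Z)^2$-action with each $C_G(a)$ locally nilpotent is itself locally nilpotent.

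Passing to any finite $A$-invariant continuous quotient $\bar G = G/N$, the centralizers $C_{\bar G}(a) = C_G(a)N/N$ are nilpotent, and by the finite-group theorem of Ward---nilpotent centralizers under coprime non-cyclic elementary-abelian action force nilpotence---the quotient $\bar G$ is itself nilpotent. Hence $G$ is pronilpotent, so it decomposes as the Cartesian product of its Sylow pro-$p$ subgroups, each $A$-invariant and inheriting the hypothesis. We reduce to the case that $G$ is a pro-$p$ group.

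For $G$ pro-$p$, form the graded $\F_p$-Lie algebra $L = L(G)$ via the Zassenhaus--Jennings--Lazard series. Since $\gcd(|A|,p)=1$, the $A$-action lifts to $L$, which decomposes along characters of $A$; in particular $L$ is generated by $\sum_{a\in A^\#} C_L(a)$. Local nilpotency of $C_G(a)$ forces the homogeneous elements of $C_L(a)$ to be ad-nilpotent in $L$, so $L$ is generated by ad-nilpotent elements. Zelmanov's theorem---a PI Lie algebra over a field generated by finitely many ad-nilpotent elements is nilpotent---then applies to arbitrary finite subsets of these generators, yielding local nilpotency of $L$, which lifts to local nilpotency of $G$ by the standard Lazard correspondence for pro-$p$ groups.

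The principal obstacle is verifying the polynomial identity on $L(G)$ required to apply Zelmanov's theorem. The hypothesis gives only pointwise Engel elements in each centralizer, not any uniform Engel bound on $G$. One extracts the identity by combining a Baire-category upgrade of pointwise to uniform Engel on some open subgroup of each $C_G(a)$ with the coprime $A$-decomposition of $L$, in the spirit of Makarenko--Khukhro techniques on Lie rings with coprime automorphism groups. This is precisely where the structure $A\cong(\Z/q\Z)^2$---which provides enough distinct centralizer subspaces to span $L$ and propagate an Engel-type identity---enters essentially.
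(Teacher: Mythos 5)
This statement is not proved in the paper at all: it is Theorem~B2 of Acciarri--Shumyatsky--Silveira, quoted as a known ingredient. So the comparison can only be with the cited source and with the way the present paper uses the same circle of ideas; on that basis your proposal has a fatal gap at its very first reduction.

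You replace the hypothesis ``every element of $C_G(a)$ is Engel \emph{in $G$}'' by the strictly weaker statement ``each $C_G(a)$ is locally nilpotent'' and then claim that Ward's theorem upgrades nilpotent centralizers to nilpotency of a finite quotient. Ward's theorem does not say that: for $A$ elementary abelian of order $q^2$ acting coprimely with all $C_G(a)$ nilpotent, the conclusion is that $G$ is soluble of \emph{Fitting height at most $2$}, and this bound is sharp. The paper under review uses Ward's theorem in exactly this form in the proof of Proposition~5.4, and its entire Section~5 exists precisely because ``all $C_G(a)$ locally nilpotent'' does not force pronilpotency. Consequently your reduction to the pro-$p$ case collapses, and with it the rest of the argument. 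The deeper problem is that the information you discarded is exactly what the Lie-ring step needs: to show that a homogeneous element $\bar g\in C_{L}(a)$ is ad-nilpotent in $L=L_p(G)$ one must control the commutators $[\bar x,\,{}_n\bar g]$ for \emph{all} $\bar x\in L$, which requires knowing that $g$ is Engel on all of $G$ (this is how the analogous Lemma~4.6 of the present paper works, via a Baire-category coset identity for $g$ acting on $G$); local nilpotency of $C_G(a)$ says nothing about how $g$ acts on elements outside its own centralizer. A correct proof keeps the global Engel hypothesis throughout: it yields ad-nilpotency of the eigenvector generators of $L\otimes\Z[\zeta]$, the polynomial identity comes from the Bahturin--Zaitsev theorem combined with the Wilson--Zelmanov coset-identity result (not from Makarenko--Khukhro-type theorems), Zelmanov's theorem then gives nilpotency of $L$, hence linearity of finitely generated pro-$p$ subgroups, and the non-pronilpotent case is handled separately using the Fitting height bound, not bypassed.
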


This result can be viewed as an `automorphism' extension of the Wilson--Zelmanov theorem \cite[Theorem~5]{wi-ze} saying that a profinite Engel group is locally nilpotent. This Wilson--Zelmanov theorem was based on Zelmanov's deep results \cite{ze92,ze95,ze17} on Lie algebras with Engel conditions, and the proof in the aforementioned `automorphism' extension also used these results.

Recall that a group $G$ is called an Engel group if for every $x,g\in G$ the equation $[x,\,{}_{n} g]=1$ holds for some $n=n(x,g)$ depending on $x$ and $g$.
Henceforth, we use the left-normed simple commutator notation
$[a_1,a_2,a_3,\dots ,a_r]:=[...[[a_1,a_2],a_3],\dots ,a_r]$ and the abbreviation $[a,\,{}_kb]:=[a,b,b,\dots, b]$ where $b$ is repeated $k$ times. A group is said to be locally nilpotent if every finite subset generates a nilpotent subgroup.

In recent papers \cite{khu-shu16,
khu-shu18,
khu-shu18a,
khu-shu19,
khu-shu20,
khu-shu-tra} we considered generalizations of Engel conditions for finite, profinite, and compact groups  using the concept of Engel sinks.

\begin{definition*} \label{d}
 An \textit{Engel sink} of an element $g$ of a group $G$ is a set ${\mathscr E}(g)$ such that for every $x\in G$ all sufficiently long commutators $[x,g,g,\dots ,g]$ belong to ${\mathscr E}(g)$, that is, for every $x\in G$ there is a positive integer $n(x,g)$ such that
 $[x,\,{}_{n}g]\in {\mathscr E}(g)$ for all $n\geq n(x,g).
 $
 \end{definition*}
 \noindent (Thus, $g$ is an Engel element precisely when we can choose ${\mathscr E}(g)=\{ 1\}$, and $G$ is an Engel group when we can choose ${\mathscr E}(g)=\{ 1\}$ for all $g\in G$.)

In \cite{khu-shu18,khu-shu20} we considered compact (Hausdorff) groups in which  every element has a finite or countable Engel sink and proved the following theorem.

\begin{theorem}[{\cite[Theorem~1.2]{khu-shu20}}]\label{t-20}
If every element of a compact group $G$ has a countable Engel sink, then $G$ has a finite normal subgroup $N$ such that $G/N$ is locally nilpotent.
\end{theorem}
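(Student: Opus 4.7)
\medskip
\noindent\textbf{Proof plan.}
The plan is to reduce to the profinite case, use Baire category to upgrade the pointwise countable-sink hypothesis into a genuine Engel condition, apply the Wilson--Zelmanov theorem to obtain local nilpotency on an open subgroup, and finally locate the finite ``defect'' $N$ via FC-type techniques.

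First I would dispose of the connected component. A compact Hausdorff $G$ is an extension $1\to G_0\to G\to G/G_0\to 1$ with $G/G_0$ profinite and $G_0$ an inverse limit of connected compact Lie groups. An analysis of iterated-commutator dynamics in non-abelian connected compact Lie groups (via the structure of maximal tori and root systems) shows that the countable-sink hypothesis forces every such Lie quotient of $G_0$ to be abelian; hence $G_0$ itself is abelian and its contribution is handled by hand. The substantive case is $G$ profinite.

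Next I would run a Baire category argument. For profinite $G$ and $g\in G$, the closed sets
\[
T_{g,n}\;=\;\bigcap_{k\ge n}\bigl\{x\in G : [x,\,{}_k g]\in\overline{\mathscr{E}(g)}\bigr\}
\]
cover $G$ as $n$ ranges over $\mathbb{N}$, so Baire supplies some $n$ and an open coset on which all sufficiently long commutators with $g$ lie in the countable closed set $\overline{\mathscr{E}(g)}$. A further dynamical refinement, iterating $x\mapsto[x,g]$ on a suitable open subgroup, would improve this to $[x,\,{}_k g]=1$ for large $k$, so $g$ becomes a genuine Engel element there. Piecing together these Engel conditions and invoking the Wilson--Zelmanov theorem (profinite Engel groups are locally nilpotent), one obtains an open locally nilpotent subgroup $H\le G$.

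Finally I would extract the finite normal $N$. For each $g\in G$, the countable-sink hypothesis forces the conjugation action of $g$ on $H$ to have eventually trivial iterated-commutator orbits on a sufficiently deep open characteristic subgroup of $H$; a Dietzmann-type lemma then shows that the FC-center of $G$ contains an open subgroup of finite index, and the torsion obstruction to local nilpotency within it yields the desired finite characteristic $N$ with $G/N$ locally nilpotent. The main obstacle will be the refinement in the Baire step --- the passage from ``$[x,\,{}_k g]$ is eventually in a countable sink'' to ``$[x,\,{}_k g]$ is eventually trivial'' on a finite-index subgroup --- where the bulk of the topological and combinatorial work concentrates.
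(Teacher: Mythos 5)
This theorem is not proved in the paper at all: it is quoted verbatim from \cite{khu-shu20}, so there is no in-paper proof to compare against, and your proposal can only be judged on its own merits and against the known argument. Your coarse skeleton (dispose of the connected component, reduce to the profinite case, apply Baire category, invoke Wilson--Zelmanov, then extract a finite normal subgroup) does match the architecture of the actual proof, and the treatment of $G_0$ is essentially correct. But there is a genuine gap precisely where you yourself place ``the bulk of the work'': the passage from ``$[x,\,{}_k g]$ eventually lies in a countable set'' to ``$[x,\,{}_k g]$ is eventually trivial'' on an open subgroup. This is not a refinement of the Baire step to be supplied later --- it is the entire content of the theorem, and in the form you state it the intermediate claim is false for general profinite groups. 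For example, in $C_7\rtimes \Z_3$ with $\Z_3$ acting through its order-$3$ quotient, every element has a finite Engel sink, yet a generator $g$ of $\Z_3$ satisfies $[x,\,{}_kg]=x^{(a-1)^k}\ne 1$ for all $k$ when $x$ generates $C_7$, so $g$ is not Engel on any open subgroup containing $C_7$; an Engel condition only appears after factoring out a finite normal subgroup, which is the very object being constructed. What Baire category actually delivers is the coset identity of Lemma~\ref{l-engk}, namely $[[nb,\,{}_ig],g^k]=1$ for all $n$ in an open normal subgroup $N$, and converting that into genuine Engel behaviour is achieved in \cite{khu-shu20} (as in Sections~3--4 of the present paper) only for pronilpotent groups, via the Zassenhaus filtration, ad-nilpotency of homogeneous elements, Zelmanov's Theorem~\ref{tz}, and a separate Baker--Campbell--Hausdorff analysis for uniformly powerful pro-$p$ groups (compare Proposition~\ref{pr-unif}); no purely dynamical iteration of $x\mapsto[x,g]$ accomplishes this.

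Two further weaknesses: first, your sets $T_{g,n}$ are built from $\overline{\mathscr E(g)}$, and the closure of a countable set need not be countable, so the conclusion you extract is strictly weaker than the pointwise version with $S_{kl}=\{x\mid [x,\,{}_kg]=s_l\}$ used in Lemma~\ref{l-engk} (this is repairable, but as written it loses exactly the constancy of $[nb,\,{}_ig]$ on the coset that the later arguments need). Second, the endgame via FC-centres and a Dietzmann-type lemma is asserted rather than argued: the actual proof instead shows that $\gamma_\infty$ of suitable subgroups (Sylow pieces of the Fitting subgroup, subgroups of the form $F\langle g\rangle$) is finite --- using the countability hypothesis to exclude continuum-many distinct sink elements indexed by subsets of an infinite set of primes, as in Lemma~\ref{l-511} --- and then inducts on the index of the Fitting subgroup. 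As it stands, the proposal correctly identifies the landmarks but does not supply the arguments that carry the load.
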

(By ``countable'' we mean ``finite or  denumerable''.)

The purpose of this paper is to obtain the following `automorphism' extension of this result for profinite groups.

\begin{theorem}\label{t-main}
Let $q$ be a prime, and $A$ an elementary abelian $q$-group of order $q^2$. Suppose that $A$ acts  by coprime automorphisms on a profinite group $G$ in such a way that for each $a\in A^\#$ every element of $C_G(a)$ has a countable Engel sink in $G$. Then $G$ has a finite normal subgroup $N$ such that $G/N$ is locally nilpotent.
\end{theorem}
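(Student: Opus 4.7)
The plan is to combine Theorem~\ref{t-ass} with Theorem~\ref{t-20}: I will construct a finite $A$-invariant normal subgroup $N\trianglelefteq G$ such that every element of each centralizer $C_G(a)$, $a\in A^\#$, becomes Engel in $G/N$, after which Theorem~\ref{t-ass} applied to $G/N$ delivers the local nilpotency of the quotient.

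The first input is almost immediate. Since the coprime action of $A$ on the profinite group $G$ is continuous, each $C_G(a)$ is a closed, hence profinite, subgroup. For $x\in C_G(a)$, any Engel sink of $x$ computed inside $C_G(a)$ is contained in $\mathscr{E}_G(x)$, so every element of $C_G(a)$ has a countable Engel sink with respect to the ambient group $C_G(a)$. Theorem~\ref{t-20} then produces, for each $a\in A^\#$, a finite normal subgroup $M_a\trianglelefteq C_G(a)$ with $C_G(a)/M_a$ locally nilpotent. Moreover, by coprimeness and continuity, $G$ is topologically generated by the (finitely many) $C_G(a)$, $a\in A^\#$, in analogy with \cite[Theorem~6.2.4]{gor}.

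The main step, and the principal obstacle, is to upgrade these local data to a single finite $A$-invariant normal subgroup $N$ of $G$ such that every $x\in C_G(a)$ is Engel in $G/N$. Here the information from Theorem~\ref{t-20} is not enough by itself: local nilpotency of $C_G(a)/M_a$ only controls commutators inside $C_G(a)$, whereas being Engel in $G/N$ requires controlling commutators $[y,\,{}_n x]$ for all $y\in G$. I would work directly with the minimal closed Engel sink $\mathscr{E}_G(x)$, which exists by a standard Zorn argument in compact groups and is contained in the countable set $\mathscr{E}_G(x)$; the goal is to show that $\mathscr{E}_G(x)$ lies in a finite $A$-invariant normal subgroup of $G$. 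The plan is to reduce first to the pro-$p$ case by exploiting a Sylow-style decomposition compatible with the $A$-action (available precisely because the action is coprime), and then to invoke the Wilson--Zelmanov machinery \cite{wi-ze} on Lie algebras with Engel conditions, as adapted in \cite{khu-shu18,khu-shu20}, to extract a finite residue out of each countable closed sink. Because $A^\#$ has only $q^2-1$ elements and each $C_G(a)$ contributes only finitely many ``bad'' cosets (encoded in $M_a$), the finitely many finite normal subgroups so obtained can be combined and enlarged into a single finite $A$-invariant normal subgroup $N\trianglelefteq G$ absorbing every relevant sink.

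With such $N$ constructed, the coprimeness of the $A$-action on the profinite quotient gives $C_{G/N}(a)=C_G(a)N/N$, so every element of $C_{G/N}(a)$ is Engel in $G/N$ for each $a\in A^\#$. Theorem~\ref{t-ass} then yields that $G/N$ is locally nilpotent, completing the proof of Theorem~\ref{t-main}. The delicate point throughout is the uniform absorption step: producing one finite $A$-invariant normal subgroup of $G$ that swallows every minimal closed Engel sink $\mathscr{E}_G(x)$ as $a$ ranges over $A^\#$ and $x$ over $C_G(a)$. This is exactly where the countable-sink hypothesis and the coprime automorphism structure must be tightly integrated, and I expect it to account for most of the technical work.
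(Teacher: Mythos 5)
Your overall target---producing a finite $A$-invariant normal subgroup $N$ modulo which every element of each $C_G(a)$ becomes Engel, and then quoting Theorem~\ref{t-ass}---is indeed how the argument must eventually close, and your observation that Theorem~\ref{t-20} applied inside $C_G(a)$ only controls commutators with entries from $C_G(a)$ is exactly the right diagnosis. But the proposal stops at the diagnosis: the ``main step'' is named rather than proved, and the two concrete assertions you do make would fail as stated. First, the subgroups $M_a$ supplied by Theorem~\ref{t-20} are normal in $C_G(a)$, not in $G$; their normal closures in $G$ need not be finite, and nothing in the coprimeness hypothesis lets you ``combine and enlarge'' them into a finite normal subgroup of $G$. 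A priori the union of the Engel sinks $\mathscr E(x)$ over the (typically uncountably many) $x\in\bigcup_a C_G(a)$ has no reason to lie in any finite subgroup --- establishing that it does is the entire content of the theorem, not a bookkeeping step over the $q^2-1$ elements of $A^\#$. Second, the proposed ``Sylow-style decomposition compatible with the $A$-action'' of $G$ itself does not exist: $G$ need not be pronilpotent (nor even prosoluble), so it is not the product of its Sylow subgroups, and the reduction to pro-$p$ groups cannot be performed at the level of $G$.

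The paper fills this gap with several layers that your sketch does not touch. (i) For finitely generated uniformly powerful pro-$p$ groups, a Baker--Campbell--Hausdorff/Lie $\Z_p$-algebra argument (Proposition~\ref{pr-unif}) showing that a countable Engel sink forces the element to be Engel. (ii) For general pro-$p$ groups and then pronilpotent $A$-invariant sections, the Zassenhaus-filtration Lie algebra together with Zelmanov's theorem, the Bahturin--Zaitsev theorem, and Lazard's characterization of $p$-adic analytic groups (Proposition~\ref{pr-pro-p}, Theorem~\ref{t2}); only at this stage does a Sylow decomposition become available. (iii) Ward's theorem, giving an open prosoluble $A$-invariant subgroup of Fitting height at most $2$, plus an uncountability argument (Lemma~\ref{l-511}) showing that $\gamma_{\infty}(F\langle g\rangle)$ is finite---if infinitely many Sylow components had nontrivial sinks one would get continuum many sink elements---leading to an open locally nilpotent normal subgroup (Proposition~\ref{pr-virt}). (iv) A final induction on the index of $F(G)$ with a case split (abelian quotient versus a product of non-abelian simple groups) and the finiteness of the Schur multiplier to conclude that $\gamma_{\infty}(G)$ is finite. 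None of these steps is recoverable from Theorems~\ref{t-ass} and~\ref{t-20} together with countability of the sinks alone, so the proposal as written has a genuine gap precisely where you predicted the technical work would lie.
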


In the proof, first the case of pro-$p$ groups  is considered, where Lie ring methods are applied including Zelmanov's theorem on Lie algebras satisfying a polynomial identity and generated by elements all of whose products are ad-nilpotent
\cite{ze92,ze95,ze17}, in conjunction with the Bahturin--Zaitsev theorem \cite{bah-zai} on polynomial identities of Lie algebras with automorphisms. This analysis provides a reduction to uniformly powerful pro-$p$ groups, for which a different Lie algebra over $p$-adic integers
 is used canonically connected with the group via the Baker--Campbell--Hausdorff formula. There is no straightforward connection between Engel commutators in the group and in the corresponding Lie algebra, or between Engel sinks of their elements. Nevertheless the Lie algebra is used to prove that in a uniformly powerful pro-$p$ group elements with countable Engel sinks are in fact Engel elements. Then the desired result is derived for the case of pronilpotent groups. In the general case, firstly an open locally nilpotent subgroup is found, and the proof proceeds by induction on its index.

\section{Preliminaries}

In this section we recall some  notation and terminology and several known  properties of Engel sinks in profinite groups.

Our notation and terminology for profinite groups is standard; see, for example,  \cite{anal, rib-zal,  wil}.  A subgroup (topologically) generated by a subset $S$ is denoted by $\langle S\rangle$. Recall that centralizers are closed subgroups, while commutator subgroups $[B,A]=\langle [b,a]\mid b\in B,\;a\in A\rangle$ are the closures of the corresponding abstract commutator subgroups.

We denote by $\pi (k)$ the set of prime divisors of $k$, where $k$ may be a positive integer or a Steinitz number, and by $\pi (G)$ the set of prime divisors of the orders of elements of a (profinite) group $G$. Let $\sigma$ be a set of primes. An element $g$ of a group is  a $\sigma$-element if $\pi(|g|)\subseteq \sigma$, and a group $G$ is a $\sigma$-group if all of its elements are $\sigma$-elements. We denote by $\sigma'$ the complement of $\sigma$ in the set of all primes. When $\sigma=\{p\}$,  we write $p$-element, $p'$-element, etc.

Recall that a pro-$p$ group  is an inverse limit of finite $p$-groups, a pro-$\sigma $ group is an inverse limit of finite $\sigma$-groups, a pronilpotent group is an inverse limit of finite nilpotent groups, a prosoluble group is an inverse limit of finite soluble groups.

We denote by  $\gamma _{\infty}(G)=\bigcap _i\gamma _i(G)$ the intersection of the lower central series of a group~$G$. A profinite group $G$ is pronilpotent if and only if $\gamma _{\infty}(G)=1$.

Profinite groups have Sylow $p$-subgroups and satisfy analogues of the Sylow theorems.  Prosoluble groups satisfy analogues of the
theorems
 on Hall $\pi$-subgroups. Pronilpotent groups are Cartesian products of their Sylow $p$-subgroups.
 We refer the reader to the corresponding chapters in \cite[Ch.~2]{rib-zal} and \cite[Ch.~2]{wil}.

For a group $A$ acting by automorphisms on a group $B$ we use the usual notation for commutators $[b,a]=b^{-1}b^a$ and commutator subgroups $[B,A]=\langle [b,a]\mid b\in B,\;a\in A\rangle$, as well as for centralizers $C_B(A)=\{b\in B\mid b^a=b \text{ for all }a\in A\}$
and $C_A(B)=\{a\in A\mid b^a=b\text{ for all }b\in B\}$.

 If $\varphi$  is an automorphism of a finite group $H$ of coprime order, that is, such that $(|\varphi |,|H|)=1$, then  we say for brevity that $\varphi$ is a coprime automorphism of~$H$. Among many well-known properties of coprime automorphisms of finite groups, we recall the following.

\begin{lemma}[{see \cite[Theorem~6.2.4]{gor}}] \label{l-q2}
If $A$ is an elementary abelian group of order $q^2$ for a prime $q$ acting by coprime automorphisms on a finite group $G$, then $G=\langle C_G(a)\mid a\in A^\#\rangle$.
\end{lemma}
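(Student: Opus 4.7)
The plan is a minimal-counterexample argument, organised around the structure of a minimal $A$-invariant normal subgroup of $G$. Set $H=\langle C_G(a)\mid a\in A^\#\rangle$, which is automatically $A$-invariant; I want to prove $H=G$. Assume $G$ is a counterexample of smallest order. For any nontrivial proper $A$-invariant normal subgroup $N$ of $G$, minimality applied to $G/N$ gives $G/N=\langle C_{G/N}(a)\mid a\in A^\#\rangle$, and the standard coprime-action identity $C_{G/N}(a)=C_G(a)N/N$ (available because $(|A|,|N|)=1$) yields $G=HN$. Thus it suffices to prove $N\subseteq H$, and by shrinking $N$ I may take it to be a minimal $A$-invariant normal subgroup; then $N=T_1\times\cdots\times T_k$ with the $T_i$ pairwise isomorphic simple groups permuted transitively by $GA$.

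The main case, which I expect to be reasonably clean, is when the $T_i$ are abelian, in which case $N$ is an elementary abelian $p$-group with $p\nmid|A|$. I would view $N$ as a semisimple $\F_p A$-module by Maschke, extend scalars to $\overline{\F_p}$, and decompose into weight spaces $N\otimes\overline{\F_p}=\bigoplus_\chi V_\chi$ for characters $\chi\colon A\to\overline{\F_p}^\times$. The key observation is that $A$ has exponent $q$, so $\chi(A)\subseteq\mu_q$; hence $\ker\chi$ has index $1$ or $q$ in $A$ and therefore contains some nontrivial $a\in A^\#$, making each $V_\chi$ centralised by a nontrivial element of $A$. Summing and descending back to $\F_p$ yields $N=\sum_{a\in A^\#}C_N(a)\subseteq H$, contradicting the choice of $G$.

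The hard part will be the case when the $T_i$ are nonabelian simple. There I would analyse the $A$-orbits on $\{T_1,\dots,T_k\}$ (each of length a power of $q$), use a diagonal-subgroup argument to translate fixed points of the orbit stabiliser in $A$ on a single factor into centralisers in $N$ of elements of $A^\#$, and invoke Glauberman-type results asserting that a noncyclic abelian coprime action on a nonabelian finite simple group $S$ already produces enough $a\in A^\#$ with $C_S(a)\ne 1$ to generate $S$. This simple-group step is the only genuinely non-elementary ingredient of the proof, and it is what makes the lemma extend beyond the solvable case.
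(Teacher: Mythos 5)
The paper does not prove this lemma at all --- it is quoted verbatim from Gorenstein \cite[Theorem~6.2.4]{gor} --- so the comparison is with the standard textbook argument. Your abelian case is correct and is essentially the classical one: for a minimal $A$-invariant normal elementary abelian $p$-subgroup $N$ with $p\ne q$, the weight-space decomposition over $\overline{\mathbb{F}_p}$ (or, equivalently, the observation via Schur's lemma that $A$ acts on each irreducible $\mathbb{F}_pA$-summand through a cyclic, hence proper, quotient) shows that every constituent is centralized by some $a\in A^\#$, whence $N=\sum_{a\in A^\#}C_N(a)$. The reduction $G=HN$ via $C_{G/N}(a)=C_G(a)N/N$ is also fine.

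The genuine gap is the nonabelian minimal normal subgroup case, which you do not actually prove: the ``Glauberman-type result'' you invoke --- that a noncyclic abelian coprime action on a nonabelian simple group $S$ produces enough $a\in A^\#$ with $C_S(a)\ne 1$ to generate $S$ --- \emph{is} the lemma itself for $G=S$, so as written the argument is circular; and the honest versions of such statements (e.g.\ that a Sylow $q$-subgroup of $\mathrm{Aut}(S)$ for $q\nmid|S|$ is cyclic, forcing a noncyclic $A$ to act with nontrivial kernel) rest on the classification of finite simple groups, which is far heavier machinery than this lemma needs. The missing idea that makes the whole difficulty evaporate is the preliminary reduction to $A$-invariant Sylow subgroups: by coprimality $G$ has an $A$-invariant Sylow $r$-subgroup $P_r$ for each $r\in\pi(G)$ (Lemma~\ref{l-inv}), and $G=\langle P_r\mid r\in\pi(G)\rangle$ since the order of the right-hand side is divisible by every $|G|_r$. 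Hence it suffices to prove the lemma for a $p$-group, where every minimal $A$-invariant normal subgroup is elementary abelian and your Maschke argument applies; no simple-group analysis, no diagonal subgroups, and no classification are required. I recommend restructuring the proof around that reduction and deleting the nonabelian branch.
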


We say that $\varphi$ is a \textit{coprime automorphism}  of a profinite group $H$  meaning that a procyclic group $\langle\varphi\rangle$ faithfully acts on $H$ by continuous automorphisms   and $\pi (\langle \varphi\rangle)\cap \pi (H)=\varnothing$. Since the semidirect product $H\langle \varphi\rangle$ is also a profinite group, $\varphi$ is a coprime automorphism of $H$
 if and only if for every open normal $\varphi$-invariant subgroup $N$ of $H$ the automorphism (of finite order) induced by $\varphi$ on $H/N$ is a coprime automorphism. The following folklore lemma follows from the Sylow theory for profinite groups and an analogue of the Schur--Zassenhaus theorem.

\begin{lemma}[{see~\cite[Lemma~4.1]{khu-shu20}}]\label{l-inv}
If $A$ is a group of coprime automorphisms of a profinite group $G$, then for every prime $q\in \pi (G)$ there is an $A$-invariant Sylow $q$-subgroup of $G$. If $G$ is in addition prosoluble, then for every subset $\sigma\subseteq \pi (G)$ there is an $A$-invariant Hall $\sigma$-subgroup of~$G$.
\end{lemma}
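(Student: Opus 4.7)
The plan is to reduce to finite $A$-invariant quotients of $G$ and then assemble the desired subgroup by an inverse-limit argument. First I would show that the open $A$-invariant normal subgroups form a fundamental system of neighbourhoods of the identity: given any open normal subgroup $M$, the set of open normal subgroups of $G$ of the same index as $M$ is finite and permuted by $A$ continuously, so the $A$-orbit of $M$ is finite and the intersection of its members is an open $A$-invariant normal subgroup contained in $M$. Consequently $G$ is the inverse limit of the finite quotients $G/N$ as $N$ ranges over open $A$-invariant normal subgroups, and on each such quotient $A$ acts by coprime automorphisms via a finite quotient of itself.

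For each such $N$, the classical result on coprime actions on finite groups (for instance \cite[Theorem~6.2.2]{gor}, whose proof rests on Schur--Zassenhaus and the conjugacy of Sylow subgroups) produces an $A$-invariant Sylow $q$-subgroup of $G/N$; let $\mathcal{P}_N$ denote the (nonempty, finite) set of all of them. Whenever $N_1\subseteq N_2$, the canonical projection $G/N_1\to G/N_2$ sends members of $\mathcal{P}_{N_1}$ to members of $\mathcal{P}_{N_2}$, so $(\mathcal{P}_N)_N$ is an inverse system of nonempty finite sets. By compactness its inverse limit is nonempty; a compatible family $(P_N)_N$ determines a closed $A$-invariant subgroup $P\leq G$ whose image in each $G/N$ equals $P_N$. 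Then $P$ is pro-$q$ and projects onto a Sylow $q$-subgroup at every finite level, so by the profinite Sylow theory $P$ is a Sylow $q$-subgroup of~$G$.

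For the prosoluble case the same scheme applies with ``Sylow $q$'' replaced by ``Hall $\sigma$''. Each finite quotient $G/N$ is soluble, and the coprime action of $A$ on $G/N$ yields an $A$-invariant Hall $\sigma$-subgroup by Hall's theorem combined with Schur--Zassenhaus. This gives a nonempty finite set at every level, and the inverse limit of the resulting system is an $A$-invariant Hall $\sigma$-subgroup of $G$ by the same compactness/profinite-Hall argument.

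The main technical point requiring care is the cofinality of $A$-invariant open normal subgroups, since the whole argument relies on being able to read off the Sylow or Hall subgroups of $G$ from those of its finite $A$-invariant quotients; once that reduction is in place, the rest is a routine application of the finite coprime-action theorems together with the nonemptiness of inverse limits of nonempty finite sets. The only other mild subtlety is that, for an infinite procyclic $A$, one should note that on each finite $G/N$ the action of $A$ factors through a finite coprime quotient (because $\pi(A)\cap\pi(G)=\varnothing$), so the finite-group results genuinely apply.
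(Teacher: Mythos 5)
Your overall strategy --- pass to finite $A$-invariant quotients, apply the finite coprime-action theorems there, and assemble the desired subgroup as an inverse limit of nonempty finite sets of $A$-invariant Sylow (or Hall) subgroups --- is correct and is the standard route. The paper itself offers no proof, only the citation to \cite[Lemma~4.1]{khu-shu20} together with the remark that the statement follows from profinite Sylow theory and an analogue of the Schur--Zassenhaus theorem; that points at essentially the same reduction (or at the equally standard variant that works directly in the profinite semidirect product $G\rtimes A$ via a Frattini argument and profinite Schur--Zassenhaus). The individual steps of your assembly --- nonemptiness and finiteness of each $\mathcal{P}_N$, compatibility under the projections, nonemptiness of the inverse limit, and the identification of the resulting closed subgroup as a Sylow $q$-subgroup (resp.\ Hall $\sigma$-subgroup) --- are all sound.

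One justification in your first paragraph is wrong, although the conclusion you draw from it is true and the fix is immediate. You assert that the set of open normal subgroups of $G$ of a given index is finite; this fails for profinite groups that are not topologically finitely generated (an infinite Cartesian power of $\mathbb{Z}/p\mathbb{Z}$ has uncountably many open subgroups of index $p$). What you actually need is only that the $A$-orbit of a given open normal subgroup $M$ is finite, and this holds for the trivial reason that $A$ is a finite group (in this paper $|A|=q^2$); for an infinite profinite group of continuous automorphisms one would instead observe that the stabilizer of the open subgroup $M$ is open in $A$, so the orbit is again finite. With that repaired, the cofinality of the open $A$-invariant normal subgroups, and hence the whole argument, goes through.
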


The following lemma is a profinite analogue of the well-known property of coprime automorphisms of finite groups.

\begin{lemma}[{\cite[Proposition~2.3.16]{rib-zal}}] \label{l-cover}
If $A$ is a group of coprime automorphisms of a profinite group $G$ and $N$ is an $A$-invariant closed normal subgroup of $G$, then every fixed point of $A$ in $G/N$ is an image of a fixed point of $A$ in $G$, that is, $C_{G/N}(A)=C(A)N/N$.
\end{lemma}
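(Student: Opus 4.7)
The plan is to reformulate the equality $C_{G/N}(A)=C_G(A)N/N$ geometrically: since the inclusion $C_G(A)N/N\subseteq C_{G/N}(A)$ is immediate, the content is that every $A$-invariant coset $gN\subseteq G$ contains a fixed point of $A$. So I fix $\bar g\in C_{G/N}(A)$, choose any lift $g\in G$, and regard $gN$ as a closed $A$-invariant subset of the compact space $G$; the aim is to locate a point of $C_G(A)$ inside it.

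First I would produce an ``approximate fixed point'' modulo every open normal subgroup. The $A$-invariant open normal subgroups $U\trianglelefteq G$ form a fundamental system of neighborhoods of $1$: given any open normal $V$, the intersection of its finitely many $A$-translates (equivalently, the preimage in $G$ of a sufficiently small open normal subgroup of the finite group $G/V\rtimes\langle A\mid_{G/V}\rangle$) is open, normal and $A$-invariant. For such a $U$, the finite quotient $G/U$ carries an induced action of a finite image of $A$ as a group of coprime automorphisms (coprimeness is preserved since $\pi(A)\cap\pi(G)=\varnothing$), and $NU/U$ is an $A$-invariant normal subgroup. Applying the classical finite version of Lemma~\ref{l-cover} to $G/U$ with this subgroup (and noting that the image of $\bar g$ in $(G/U)/(NU/U)\cong G/NU$ is $A$-fixed) yields a fixed point $c_UU\in C_{G/U}(A)$ with $c_U\in gNU$. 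Writing $c_U=gn_Uu$ with $n_U\in N$ and $u\in U$, I set $x_U:=gn_U\in gN$, which satisfies $[x_U,a]\in U$ for every $a\in A$.

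Next I would run a standard compactness argument. Define
$$F_U=\{x\in gN:[x,a]\in U\text{ for every }a\in A\}.$$
Each $F_U$ is the intersection of closed subsets (preimages of the closed set $U$ under the continuous maps $x\mapsto x^{-1}x^a$) of the compact space $gN$, hence closed, and it is nonempty by the previous paragraph. Since $F_{U_1}\cap F_{U_2}\supseteq F_{U_1\cap U_2}$ and the intersection of two $A$-invariant open normal subgroups is again one, the family $\{F_U\}$ has the finite intersection property. By compactness $\bigcap_U F_U\neq\varnothing$; any element $x$ in this intersection satisfies $[x,a]\in\bigcap_U U=\{1\}$ for every $a\in A$, i.e.\ $x\in C_G(A)\cap gN$, which is exactly what is required.

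The only real point of care is the ``lifting'' step in paragraph two: one needs the approximate fixed point to sit inside $gN$ itself rather than merely in $gNU$, and the bookkeeping that absorbs the $U$-error into $N$ is precisely what makes the passage clean. Everything else is the textbook inverse-limit translation of a finite theorem into the profinite setting, using nothing beyond compactness of $gN$ and the Sylow/Schur--Zassenhaus input already contained in the finite version of the lemma.
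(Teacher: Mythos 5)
The paper offers no proof of this lemma, simply citing \cite[Proposition~2.3.16]{rib-zal}; your argument is precisely the standard proof of that cited result, reducing to the finite coprime-action lemma in the quotients $G/U$ by $A$-invariant open normal subgroups and then invoking compactness of the closed coset $gN$ via the finite intersection property (equivalently, the nonemptiness of an inverse limit of nonempty finite sets). The argument is correct, including the one genuinely delicate step of rewriting $c_U=gn_Uu$ so that the approximate fixed point $x_U=gn_U$ lies in $gN$ itself rather than merely in $gNU$.
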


As a consequence, we also have the following.

\begin{lemma}[{\cite[Lemma 4.3]{khu-shu20}}] \label{l-gff}
If $A$ is a group of coprime automorphisms of a profinite group $G$, then
$[[G,A],A]=[G,A]$.
\end{lemma}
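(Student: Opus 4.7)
The inclusion $[[G,A],A]\subseteq [G,A]$ is immediate, so the task reduces to showing the reverse containment. Since $[G,A]$ is generated (as a closed subgroup) by the commutators $[g,a]$ with $g\in G$ and $a\in A$, it suffices to prove that each such commutator already lies in $[[G,A],A]$.

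The plan is to first establish the profinite analogue of the Fitting-type decomposition $G=C_G(A)\cdot [G,A]$, and then finish by a one-line commutator identity. The closed subgroup $[G,A]$ is $A$-invariant and normal in $G$, and by the very definition of the commutator subgroup, $A$ acts trivially on the quotient $G/[G,A]$. Hence $C_{G/[G,A]}(A)=G/[G,A]$. Applying Lemma \ref{l-cover} with $N=[G,A]$ gives $C_G(A)[G,A]/[G,A]=G/[G,A]$, so $G=C_G(A)\cdot [G,A]$.

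Given any $g\in G$ and $a\in A$, write $g=cx$ with $c\in C_G(A)$ and $x\in [G,A]$. The standard identity $[cx,a]=[c,a]^x[x,a]$, together with $[c,a]=1$, yields $[g,a]=[x,a]\in [[G,A],A]$. Since every topological generator of $[G,A]$ lies in the closed subgroup $[[G,A],A]$, the inclusion $[G,A]\subseteq [[G,A],A]$ follows.

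There is no serious obstacle: the proof is the familiar finite-group argument for coprime actions, transported to the profinite setting entirely through Lemma \ref{l-cover} (the coprime-covering property of centralizers). The only point that requires mild care is keeping track of topological closures, but this is taken care of by the fact that $[[G,A],A]$ is closed by definition, so containment of the generating commutators suffices.
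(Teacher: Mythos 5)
Your proof is correct and follows exactly the route the paper intends: the lemma is stated there as "a consequence" of Lemma~\ref{l-cover}, and your argument --- deriving $G=C_G(A)[G,A]$ from that covering lemma applied to $N=[G,A]$ and then reducing each generator $[g,a]=[cx,a]=[x,a]$ --- is the standard derivation, with the topological closure issue correctly handled by checking only the generating commutators.
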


The following well-known fact is a straightforward consequence of the Baire Category Theorem (see \cite[Theorem~34]{kel}).

\begin{theorem}\label{bct}
If a compact Hausdorff group is a countable union of closed subsets, then one of these subsets has non-empty interior.
\end{theorem}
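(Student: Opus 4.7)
The plan is to deduce the statement as an immediate corollary of the classical Baire Category Theorem; the group structure plays no role in the argument, and only the underlying topological properties of $G$ matter.

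Recall that a topological space $X$ is called a \emph{Baire space} if it cannot be written as a countable union of closed subsets with empty interior, or equivalently, if every countable intersection of dense open subsets of $X$ is dense in $X$. The classical Baire Category Theorem, in the form cited as \cite[Theorem~34]{kel}, asserts that every locally compact Hausdorff space is a Baire space. Since any compact Hausdorff group is, in particular, a locally compact Hausdorff space, it follows at once that it is a Baire space. This is the only external input needed.

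Now suppose $G$ is a compact Hausdorff group and $G=\bigcup_{n\in\N}F_n$ with each $F_n$ closed in $G$. Argue by contradiction: if every $F_n$ had empty interior, then $G$ would be a countable union of closed subsets of empty interior, contradicting the Baire property of $G$ established above. Hence some $F_n$ must have non-empty interior, which is the conclusion.

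There is no real obstacle to this argument; the only point requiring any care is to reconcile the precise formulation of \cite[Theorem~34]{kel} with the equivalent form used here, namely that a countable closed cover of a locally compact Hausdorff space cannot have every member with empty interior. This equivalence is standard (complements of closed sets with empty interior are dense open, and their intersection being dense is equivalent to their complements not covering the space), so once the translation is made the desired consequence follows immediately, without use of continuity of the group operations, of Haar measure, or of any further structure of $G$.
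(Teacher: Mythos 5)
Your argument is correct and is precisely the one the paper intends: it cites the statement as a straightforward consequence of the Baire Category Theorem \cite[Theorem~34]{kel}, using only that a compact Hausdorff group is a (locally) compact Hausdorff space and hence a Baire space. Your spelled-out contradiction argument and the remark that the group structure is irrelevant match the paper's (implicit) proof exactly.
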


We now recall a few general properties of Engel sinks. Clearly, the intersection of two Engel sinks of a given element $g$ of a group $G$ is again an Engel sink of $g$, with the corresponding function $n(x,g)$ being the maximum of the two functions. Therefore, if $g$ has a \textit{finite} Engel sink, then $g$ has a unique smallest Engel sink. If $\mathscr E(g)$ is the smallest Engel sink of $g$, then the restriction of the mapping $x\mapsto [x,g]$ to $\mathscr E(g)$ must be surjective, which gives the following characterization.

\begin{lemma}[{\cite[Lemma~2.1]{khu-shu20}}]\label{l-min}
If an element $g$ of a group $G$ has a finite Engel sink, then $g$ has a smallest Engel sink $\mathscr E (g)$ and for every $s\in \mathscr E (g)$ there is $k\in {\mathbb N}$ such that  $s=[s,\,{}_kg]$.
\end{lemma}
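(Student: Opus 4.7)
The plan has two parts: first establish the existence of a smallest Engel sink, and then derive the periodicity property.

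For the existence, I would exploit the observation recorded in the paragraph preceding the lemma that the intersection of two Engel sinks of $g$ is again an Engel sink (the new controlling function being the pointwise maximum of the two given ones). The hypothesis furnishes at least one finite Engel sink $\mathscr E_0(g)$. Since $\mathscr E_0(g)$ is finite it has only finitely many subsets, so the intersection $\mathscr E(g)$ of all Engel sinks of $g$ contained in $\mathscr E_0(g)$ is really a finite intersection and hence is still an Engel sink. For an arbitrary Engel sink $T$ of $g$, the set $T\cap \mathscr E_0(g)$ is an Engel sink contained in $\mathscr E_0(g)$, so it contains $\mathscr E(g)$; thus $\mathscr E(g)\subseteq T$, and $\mathscr E(g)$ is the smallest Engel sink of $g$.

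For the periodicity property, I would let $\phi(x):=[x,g]$, so that $\phi^n(x)=[x,\,{}_n g]$, and define $S:=\{s\in\mathscr E(g): \phi^k(s)=s \text{ for some } k\geq 1\}$, the set of $\phi$-periodic points in $\mathscr E(g)$. The key step is to show that $S$ is itself an Engel sink of $g$; by minimality of $\mathscr E(g)$, this forces $S=\mathscr E(g)$, which is precisely the conclusion. Given any $x\in G$, the tail of the sequence $(\phi^n(x))_n$ eventually lies in the finite set $\mathscr E(g)$, so the pigeonhole principle produces indices $n(x,g)\leq m_1<m_2$ with $\phi^{m_1}(x)=\phi^{m_2}(x)$; then $s:=\phi^{m_1}(x)$ satisfies $\phi^{m_2-m_1}(s)=s$ and so belongs to $S$. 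Because iterated images of a $\phi$-periodic point remain $\phi$-periodic with the same period, in fact $\phi^n(x)\in S$ for every $n\geq m_1$, which is the Engel-sink condition for $S$ at $x$.

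The whole argument is essentially elementary combinatorics on the finite orbit structure of $\phi$ acting (eventually) on $\mathscr E(g)$, so no serious obstacle is anticipated. The one point that deserves a careful sentence is closure of $S$ under $\phi$, i.e.\ that once an iterate $\phi^n(x)$ enters $S$ it stays in $S$ for all larger $n$; this is what turns an isolated pigeonhole coincidence into the honest Engel-sink property required for every $x\in G$ simultaneously.
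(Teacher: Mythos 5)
Your proof is correct. For the existence of the smallest sink you argue just as the paper does (the intersection of two Engel sinks is again an Engel sink), and your extra care in reducing the a priori infinite intersection to a finite one---only the finitely many subsets of the given finite sink $\mathscr E_0(g)$ can occur---is a welcome precision that the paper's one-line ``therefore'' glosses over. For the periodicity claim the paper's route is slightly different: it observes that the restriction of $\phi\colon x\mapsto[x,g]$ to $\mathscr E(g)$ must be surjective onto $\mathscr E(g)$ (otherwise its image intersected with $\mathscr E(g)$ would be a strictly smaller Engel sink), so $\phi$ permutes the finite set $\mathscr E(g)$ and every element satisfies $s=[s,\,{}_k g]$ with $k$ the order of that permutation. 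You instead show that the set $S$ of $\phi$-periodic points of $\mathscr E(g)$ is itself an Engel sink---via pigeonhole on the tail of the sequence $[x,\,{}_n g]$ and the observation that iterates of a periodic point remain periodic (and remain in $\mathscr E(g)$, since the indices involved stay above $n(x,g)$)---and then invoke minimality a second time. The two arguments are equally elementary and yield the same conclusion; yours avoids having to justify surjectivity, while the paper's gives marginally more structure, namely a single exponent $k$ that works for all $s\in\mathscr E(g)$ simultaneously, though the lemma as stated needs only a per-element $k$.
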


For profinite groups we proved in \cite{khu-shu20} the following lemma as  a consequence of the Baire Category Theorem~\ref{bct}; we include it here with the proof for the reader's benefit.

\begin{lemma}\label{l-engk}
Suppose that an element $g$ of a profinite group $G$ has a countable Engel sink. Then there are positive integers $i,k$ and a coset $Nb$ of an open normal subgroup $N$  such that
$$
[[nb,\,{}_ig],g^k]=1 \qquad \text{for all}\quad n\in N.
$$
\end{lemma}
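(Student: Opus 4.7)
The plan is to apply the Baire Category Theorem (Theorem~\ref{bct}) to a decomposition of $G$ coming from the countable Engel sink, and then exploit the freedom of translation inside a coset to produce the centralization relation.

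\textbf{Step 1 (Decomposing $G$).} Enumerate the countable Engel sink as $\mathscr E(g)=\{s_1,s_2,\dots\}$. For each pair of positive integers $(i,j)$, set
\[
X_{i,j}\;=\;\bigl\{x\in G\;:\;[x,\,{}_{i}g]=s_j\bigr\}.
\]
Since iterated commutation with $g$ is continuous, each $X_{i,j}$ is closed. By the definition of Engel sink, every $x\in G$ lies in some $X_{i,j}$, so $G=\bigcup_{i,j}X_{i,j}$ is a countable union of closed sets. Theorem~\ref{bct} then furnishes indices $i$ and $j$ such that $X_{i,j}$ has non-empty interior. Since open normal subgroups form a base of neighbourhoods of $1$ in a profinite group, $X_{i,j}$ contains a coset $Nb$ of some open normal subgroup $N$. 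In particular,
\[
[nb,\,{}_{i}g]\;=\;s_j \qquad \text{for every } n\in N,
\]
and taking $n=1$ gives $s_j=[b,\,{}_{i}g]$.

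\textbf{Step 2 (Producing $k$).} Because $N$ is open, it has finite index in $G$, so the image of $\langle g\rangle$ in $G/N$ is finite. Choose $k\geq 1$ with $g^k\in N$. Applying the defining equality of Step~1 to the element $n=g^k\in N$ yields
\[
[bg^k,\,{}_{i}g]\;=\;s_j\;=\;[b,\,{}_{i}g].
\]

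\textbf{Step 3 (Centralization of $s_j$).} A straightforward induction on $i$, based on the identity $[uv,w]=[u,w]^v[v,w]$ and the fact that $g^k$ commutes with $g$, shows that
\[
[bg^k,\,{}_{i}g]\;=\;[b,\,{}_{i}g]^{g^k}.
\]
(The base case is $[bg^k,g]=[b,g]^{g^k}[g^k,g]=[b,g]^{g^k}$, and the inductive step uses $[a^{g^k},g]=[a,g]^{g^k}$.) Combining this with Step~2 gives $s_j^{g^k}=s_j$, that is, $[s_j,g^k]=1$.

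\textbf{Step 4 (Conclusion).} For every $n\in N$ we have $[nb,\,{}_{i}g]=s_j$, hence
\[
\bigl[[nb,\,{}_{i}g],\,g^k\bigr]\;=\;[s_j,g^k]\;=\;1,
\]
which is the desired conclusion with the indices $i$ and $k$ produced above. The only real content is the Baire step; the rest is a short commutator computation. The main conceptual point that I would want to make sure is presented cleanly is that the constancy of the map $n\mapsto[nb,\,{}_{i}g]$ on the coset $Nb$, combined with the fact that $g^k$ itself lies in $N$, forces $g^k$ to centralize the common value $s_j$.
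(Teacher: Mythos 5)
Your proposal is correct and follows essentially the same route as the paper: the same Baire category decomposition into closed sets $\{x: [x,\,{}_ig]=s_j\}$, followed by the observation that translating $b$ within the coset $Nb$ by a suitable power $g^k$ forces $g^k$ to centralize the common value $s_j$. One cosmetic point: in Step~2 the element $bg^k$ is written as $nb$ with $n=bg^kb^{-1}\in N$ (not $n=g^k$), which is harmless since $N$ is normal; the paper uses $b^{g^k}\in Nb$ instead, but the computation is the same.
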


\begin{proof} Let $\{s_1,s_2,\dots\}$ be an Engel sink of $g$.
We define the sets
$$
S_{kl}=\{x\in G\mid [x,\,{}_kg]=s_l\}.
$$
Note that each $S_{kl}$ is a closed subset of $G$. Then
$$
G=\bigcup _{k,l}S_{kl}
$$
by the definition of the Engel sink. By Theorem~\ref{bct} one of these sets $S_{ij} $ contains an open subset, which contains  a  coset $Nb$ of an open normal subgroup $N$.
Thus,
$$
[nb,\,{}_ig]=s_j\qquad \text{for all}\quad n\in N.
$$
Since $G/N$ is a finite group, the coset $Nb$ is invariant under conjugation by some power~$g^k$. Then
$$
\begin{aligned}
s_j^{g^k}&=[b,\,{}_ig]^{g^{k}}=[b^{g^{k}},\,{}_ig]\\&=[nb,\,{}_ig]\quad \text{for some } n\in N\\
&=s_j.
\end{aligned}
$$
In other words, $g^{k}$ commutes with $s_j$, so that
\begin{equation*}
[[nb,\,{}_ig],g^{k}]=[s_j,g^k]=1\qquad \text{for all}\quad n\in N.\tag*{\qedhere}
\end{equation*}
\end{proof}

\begin{remark} When a finite group $A$ acts by automorphisms on a group $G$, if $H$ is a subgroup of finite index, then $\bigcap_{a\in A}H^a$ is an  $A$-invariant subgroup of finite index. We  freely use this property throughout the paper without special references.
\end{remark}

Recall that a section of a group $G$ is a quotient $S/T$, where $T$ is a normal subgroup of a subgroup~$S$. If $\f$ is an automorphism (or an element) of $G$, then a section $S/T$ is said to be $\f$-invariant if both $S$ and $T$ are $\f$-invariant subgroups (respectively, normalized by~$\f$).

\begin{remark}
If an element $g$ of a group $G$ has a finite (or countable) Engel sink in~$G$, then its image also has a finite (respectively, countable) Engel sink in any $g$-invariant section of $G$. We  freely use this property throughout the paper without special references.
\end{remark}

\section{Uniformly powerful pro-$p$ groups}

In this section we consider a special class of pro-$p$ groups, the finitely generated uniformly powerful pro-$p$ groups, for which we prove that any element with a countable Engel sink must in fact be an Engel element.

Recall that a pro-$p$ group $G$ is \textit{powerful} if $[G,G]\leq G^p$ for $p\ne 2$, or  $[G,G]\leq G^4$ for $p=2$. Henceforth $G^n$ denotes the (closed) subgroup generated by all $n$-th powers of elements of $G$.
In a powerful pro-$p$ group $G$, for every $k\in \N$  the subgroup $G^{p^k}$ consists of $p^k$-th powers of elements of $G$, and the subgroups $G^{p^k}$ form a central series of $G$.

A powerful pro-$p$ group $G$ is said to be \textit{uniformly powerful} if the mapping $x\mapsto x^p$ induces an isomorphism of  $G^{p^i}/G^{p^{i+1}}$ onto $G^{p^{i+1}}/G^{p^{i+2}}$ for every $i\in \N$. This definition implies that  a uniformly powerful pro-$p$ group is torsion-free.

If $G$ is a finitely generated  uniformly powerful pro-$p$ group, then the structure of a Lie algebra $L$ over the $p$-adic integers $\Z_p$ on the same underlying set $G=L$ can be defined  in a certain canonical way; see \cite[Theorem~4.30]{anal}.
The additive group of $L$ is isomorphic to a direct sum of copies of $\Z_p$.
 The group multiplication in $G$ is then reconstructed from the Lie $\Z_p$-algebra operations in $L$ by the  Baker--Campbell--Hausdorff formula; see  \cite[Theorem~9.10]{anal}.

 Let $X,Y$ be free generators of a free associative algebra of formal power series, which also becomes a Lie algebra with respect to the Lie product $[U,V]_L=UV-VU$. The Baker--Campbell--Hausdorff formula has the form
\begin{equation}\label{e-bch}
\begin{aligned}
\Phi(X,Y)&=\log (\exp(X)\cdot \exp(Y)) \\ &=X+Y+\frac{1}{2}[X,Y]_L+\frac{1}{12}[[X,Y]_L,Y]_L- \frac{1}{12}[[X,Y]_L,X]_L+\cdots ,
\end{aligned}
\end{equation}
where
$$\log (1+Z)=\sum_{i=1}^{\infty}(-1)^i\frac{Z^i}{i}\quad\text{and}\quad \exp(Z)=\sum_{i=0}^{\infty}\frac{Z^i}{i!}$$
are formal power series, and  the right-hand  side of \eqref{e-bch} is a series in Lie algebra commutators of increasing weights in $X,Y$ with rational coefficients. The group operation in the uniformly powerful pro-$p$ group $G$ is expressed in terms of Lie $\Z_p$-algebra operations as
\begin{equation}\label{e-bch2}
xy=\Phi(x,y)=x+y+\frac{1}{2}[x,y]_L+\frac{1}{12}[[x,y]_L,y]_L- \frac{1}{12}[[x,y]_L,x]_L+\cdots ,
\end{equation}
To avoid confusion, in this section we use  $[\,,]_G$ for group commutators in $G$, and $[\,,]_L$ for Lie algebra commutators in $L$, and we write $0_L=1_G$ to denote the zero in $L$ and the identity in $G$. Although the denominators of the coefficients in \eqref{e-bch} may be divisible by powers of $p$, all the terms in \eqref{e-bch2} are  well defined in the $\Z_p$-algebra $L$  because  $[L,L]_L\leq pL$ (or  $[L,L]_L\leq 4L$ when $p=2$). Roughly speaking, if a power $p^k$ appears in the denominator  of a coefficient of a Lie algebra commutator in this series, this commutator always belongs to $p^kL$ and therefore has a unique $p^k$-th root in the additive group of $L$, so that the product is well-defined in $L$. Moreover, the $p$-adic estimates of the coefficients in \eqref{e-bch} show that the condition $[L,L]_L\leq pL$ (or  $[L,L]_L\leq 4L$ when $p=2$) ensures a `surplus' divisibility by powers of $p$ that makes the series convergent in the $p$-adic topology on $L$; see \cite[Theorem~6.28]{anal}. The group commutator is also expressed as a series
$$
[x,y]_G=[x,y]_L+\cdots
$$
in Lie algebra  commutators of increasing weights in $x,y$ with rational coefficients, which is also well defined and convergent.

It is known that \begin{equation}\label{e-iff}
                   [x,y]_G=1\quad\text{if and only if}\quad [x,y]_L=0,
                 \end{equation}
and for $\alpha\in \Z_p$ we have
\begin{equation}\label{e-exp}
  G\ni x^{\alpha}=\alpha x\in L ,
\end{equation}
where   $x^{\alpha}$ is the $\alpha$-th power of an element $x\in G$ and  $\alpha x$ is the  $\alpha $-th multiple  of $x$ regarded as an element of  $L$.

The passages from $G$ to $L$ and vice versa described above constitute a category isomorphism between finitely generated uniformly powerful pro-$p$ groups and uniformly powerful Lie $\Z_p$-algebras; see \cite[Theorem~9.10]{anal}.

The main result of this section is the following.
\begin{proposition}
  \label{pr-unif}
  Let $G$ be a finitely generated uniformly powerful pro-$p$ group. If an element $g\in G$ has a countable Engel sink, then $g$ is an Engel element of $G$.
\end{proposition}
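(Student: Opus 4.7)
My plan is to use Lemma~\ref{l-engk} to convert the countable-Engel-sink hypothesis into a bounded group Engel condition on an open coset of $G$, translate it to the Lie algebra $L$ via \eqref{e-iff}--\eqref{e-exp}, and then use the $p$-adic analyticity of the group operation in the uniformly powerful setting (via the Baker--Campbell--Hausdorff formula) to spread this condition from the open coset to all of $G=L$.

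First I would apply Lemma~\ref{l-engk} to obtain positive integers $i,k$, an open normal subgroup $N$ of $G$ and an element $b\in G$ with $[[nb,\,{}_{i}g]_G, g^k]_G=1$ for every $n\in N$. By \eqref{e-iff}, the outer group commutator vanishes if and only if the outer Lie commutator does, so $[[nb,\,{}_{i}g]_G, g^k]_L=0$. By \eqref{e-exp}, $g^k=kg$ in $L$, hence $k\cdot [[nb,\,{}_{i}g]_G, g]_L=0$; since $L$ is torsion-free as a $\Z_p$-module (as $G$ is uniformly powerful), this gives $[[nb,\,{}_{i}g]_G, g]_L=0$. Reapplying \eqref{e-iff}, I obtain $[nb,\,{}_{i+1}g]_G=1$ for every $n\in N$.

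The crux is then to extend this from the open coset $Nb$ to all of $G$. For this I would consider the map $F\colon L\to L$ defined by $F(x)=[[x,\,{}_{i}g]_G, g]_L$. Since multiplication, inversion and hence the iterated group commutator $x\mapsto [x,\,{}_{i}g]_G$ are all given by the globally convergent BCH series \eqref{e-bch2} on $L$ (see \cite[Theorem~6.28]{anal}), and the Lie bracket $y\mapsto [y,g]_L$ is polynomial, the map $F$ is a $p$-adic analytic function on the finite-rank free $\Z_p$-module $L\cong \Z_p^d$. By the previous paragraph $F$ vanishes on the nonempty open subset $Nb\subseteq L$, so the identity theorem for convergent power series over $\Z_p$ (a $\Z_p$-valued analytic function on $\Z_p^d$ that vanishes on a nonempty open ball is identically zero) forces $F\equiv 0$ on $L$. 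A final application of \eqref{e-iff} gives $[x,\,{}_{i+1}g]_G=1$ for every $x\in G$, so $g$ is an $(i+1)$-Engel element of $G$.

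The main obstacle to justify carefully is the analytic-continuation step: the claim that $x\mapsto [x,\,{}_{i}g]_G$ is truly a globally convergent power series on $L$ relies on the convergence of the BCH series on the whole uniformly powerful pro-$p$ group $G=L$, and the identity theorem itself must be invoked by a direct $p$-adic power-series argument, since $\Z_p^d$ is totally disconnected and the usual analytic-continuation heuristic does not apply literally. The remaining steps are essentially routine translations between $G$ and $L$ via \eqref{e-iff} and \eqref{e-exp}.
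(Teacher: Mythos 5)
Your opening reduction is exactly the paper's: Lemma~\ref{l-engk} plus the translations \eqref{e-iff} and \eqref{e-exp} and torsion-freeness give $[nb,\,{}_{i+1}g]_G=1$ for all $n\in N$, and one may take $N=G^{p^m}=p^mL$ so that $Nb=b+N$ is an additive ball. Where you genuinely diverge is in propagating this from the coset to all of $G$. The paper stays inside the weight-graded framework: it expands $[x+y,\,{}_{l}z]_G=\sum_i w_i(x,y,z)$ with $w_i$ of weight $i$ in $x$, substitutes $x=b^{1+p^{im}}=(1+p^{im})b\in bN$ for $i=0,\dots,k-1$, solves the resulting Vandermonde system (checking in Lemma~\ref{l-beta} that the residual coefficients $\beta_j$ are $p$-integers), and uses $w_j(b,n,g)\in p^{f(j)}L$ with $f(j)\to\infty$ to conclude that the weight-zero term $w_0(b,n,g)=[n,\,{}_{l}g]_G$ lies in $\bigcap_i p^iL=0$. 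That computation is in effect a hands-on, one-variable Strassman argument extracting the constant term of the series in the variable $b$; your proposal replaces it by citing a general identity theorem for globally convergent $p$-adic power series. The two proofs therefore rest on the same analytic fact, and yours is shorter and more conceptual, but it leaves undischarged precisely the two points you flag: (i) that $x\mapsto[[x,\,{}_{i}g]_G,g]_L$ is a \emph{single} power series converging on all of $L$ --- composition of globally convergent $p$-adic series is not automatic, and one needs the norm estimates of \cite[Theorem~6.28 and Corollary~6.38]{anal}, which are the very same input the paper uses to obtain $f(i)\to\infty$; and (ii) the multivariate identity theorem, which is true (rescale the ball to $\Z_p^d$ and apply Strassman's theorem with induction on the number of variables, noting the coefficients need only tend to $0$ in $\Q_p$) but must be proved, since merely locally analytic functions on the totally disconnected space $\Z_p^d$ can vanish on open sets without vanishing identically. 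With those two points supplied your argument is complete; the paper's linearization is longer but self-contained and only ever needs the single coefficient $w_0$ rather than the vanishing of the whole series.
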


\begin{proof}
Assume that $G$ is in the `category correspondence' with a Lie $\Z_p$-algebra $L$ with the same underlying sets $G=L$, as described above.

By Lemma~\ref{l-engk}, there is  a coset $bN$ of an open normal subgroup $N$ and  positive integers $l,s$ such that   $[[bn,\,{}_lg]_G,\,g^s]_G=1$ for all $n\in N$. By \eqref{e-iff} and \eqref{e-exp} this means that
$$
0_L=[[bn,\,{}_lg]_G,\,g^s]_L=[[bn,\,{}_lg]_G,\,sg]_L=s[[bn,\,{}_lg]_G,\,g]_L.$$
Since $G$ (or the additive group of $L$) is torsion-free, we get $[[bn,\,{}_lg]_G,g]_L=\nobreak 0$, that is, $[bn,\,{}_{l+1}g]_G=1$. To simplify notation, we redenote $l+1$ by $l$, so $[bn,\,{}_{l}g]_G=1$ for all $n\in N$.

We can assume that $N=G^{p^m}$. Then $bN=b+N$ and therefore $b+n\in b+N=bN$ for any $n\in N$. Thus, we have
\begin{equation}\label{e1}
[b+n,\,{}_{l}g]_G=1\quad \text{ for any }n\in N.
\end{equation}
We also have $b^{p^m}\in N$, and the idea is to replace $b$ in \eqref{e1} with powers $b^{1+p^{m}}, b^{1+p^{2m}}, \dots $ belonging to $b+N$  and take  linear combinations of the resulting equations to eliminate~$b$, so  for any given $n\in N$ we will obtain $[n,\,{}_{l}g]_G=1$. For further references, we note that
\begin{equation}\label{e1m}
1=[b+b^{p^{im}}+n,\,{}_{l}g]_G=[(1+{p^{im}})b+n,\,{}_{l}g]_G=0_L.
\end{equation}
for any $n\in N$ and any $i\in \N$.

First we consider a free $\Q$-algebra of formal power series in non-commuting variables  $x,y,z$. This is a Lie $\Q$-algebra with respect to the product $[u,v]_L=uv-vu$. The Baker--Campbell--Hausdorff formula defines the structure of a group on the same set, and elements $x,y,z$ generate a free group; see \cite[Ch.~II]{bou} for details. We denote by $[u,v]_G=[u,v]_L+\cdots $ the group commutator in this group.

We write the group commutator $[x+y,\,{}_{l}z]_G$  as an (infinite) linear combination of Lie algebra commutators with rational coefficients resulting from  repeated application of the Baker--Campbell--Hausdorff formula:
\begin{equation}\label{e-free}
  [x+y,\,{}_{l}z]_G=[x+y,\,{}_{l}z]_L+\cdots.
\end{equation}
Note that when we substitute elements of our uniformly powerful pro-$p$ group $G$ instead of $x,y,z$, the same formula holds with the series on the right becoming convergent in $L$.
Expanding brackets and collecting terms  we represent the right-hand side as
\begin{equation*}
[x+y,\,{}_{l}z]_G=w_0(x,y,z)+w_1(x,y,z)+w_2(x,y,z)+\cdots ,
\end{equation*}
where $w_i(x,y,z)$ is an infinite $\Q$-linear combination of Lie algebra commutators in $x, y, z$ each having weight $i$ in $x$. Substituting $x=b$, $y=n\in N$, $z=a$ we see from \eqref{e1} that
\begin{equation*}
1_G=[b+n,\,{}_{l}g]_G=w_0(b,n,g)+w_1(b,n,g)+w_2(b,n,g)+\cdots  =0_L,
\end{equation*}
where every term $w_i(b,n,g)$ belongs to $L$ as a sum of a convergent series. Moreover, if $p^{f(i)}$ is the least power of $p$ such that $w_i(b,n,g)\in p^{f(i)}L$, then
\begin{equation}\label{e-lim}
  \lim _{i\to\infty} f(i)=\infty .
\end{equation}
This follows from the $p$-adic estimates of the Baker--Campbell--Hausdorff formula and the powerfulness condition $[L,L]\leq pL$ (or $[L,L]\leq 4L$ for $p=2$); see \cite[Theorem~6.28 and Corollary~6.38]{anal}.

Note that \begin{equation}\label{e0}
            w_0(b,n,g)=w_0(0,n,g)=[n,\,{}_{l}g]_G.
          \end{equation}
Our aim is to show that $w_0(b,n,g)=0$ for any $n\in N$, which will imply that $g$ is an Engel element. We fix the elements $b$ and $n\in N$ for what follows.

First we conduct a certain linearization process with the free variables $x,y,z$. We denote for brevity $w_i=w_i(x,y,z)$.  Replacing $x$  in \eqref{e-free} with $x^{1+p^{im}}=(1+p^{im})x$ for $i=1,2,\dots$ we obtain
an infinite family of equations
\begin{equation}\label{e3}
\begin{aligned}
   {[x+y,\,{}_{l}z]_G} &=w_0+w_1+w_2+w_3+\cdots  \\
   [(1+p^{m})x+y,\,{}_{l}z]_G&=w_0+(1+ p^{m})w_1+(1+p^{m})^2w_2+(1+p^{m})^3w_3+\cdots  \\
   [(1+p^{2m})x+y,\,{}_{l}z]_G&=w_0+(1+ p^{2m})w_1+(1+p^{2m})^2w_2+(1+p^{2m})^3w_3+\cdots \\
      \cdots    \cdot \cdot \cdots    \cdots    \cdots    \cdots    \cdots &    \cdots    \cdots    \cdots    \cdots    \cdots   \cdots    \cdots    \cdots    \cdots    \cdots    \cdots    \cdots    \cdots    \cdots    \cdots   \cdots\cdots
\end{aligned}
\end{equation}
The coefficients of the $w_i$ on the right form an infinite matrix of Vandermond type. For every $k=1,2,\dots $ we find a linear combination with rational coefficients $c_0,\dots ,c_{k-1}$ (which may be different for different~$k$) of the top $k$ of these  equations for $i=0,\dots,k-1$   such that its right-hand side would have the form
\begin{equation*}
  w_0+\beta _{k}w_{k}+\beta _{k+1}w_{k+1}+\cdots ,
  \end{equation*}
that is, the coefficients of $w_1,\dots ,w_{k-1}$ would vanish. For that we consider the $k\times k$ matrix $V$ in the left upper corner of the coefficient matrix of the right-hand sides in \eqref{e3}, which is a Vandermond matrix. Then we can take for $c_0,\dots ,c_{k-1}$ the first row of the inverse matrix $V^{-1}$. As a result we obtain
\begin{equation}\label{e4}
  \sum_{i=0}^{k-1}c_i [(1+p^{im})x+y,\,{}_{l}z]_G=w_0+\beta _{k}w_{k}+\beta _{k+1}w_{k+1}+\cdots .
\end{equation}
When we substitute $x=b$, $y=n$, $z=g$ into $[(1+p^{im})x+y,\,{}_{l}z]_G$ we obtain by \eqref{e1m}
$$[(1+p^{im})b+n,\,{}_{l}g]_G=1_G=0_L,$$
but the coefficients $c_i$ have denominators divisible by $p$, so that we cannot use them in $L$. Instead we multiply \eqref{e4} by the determinant $\Delta =\det V$, so that all the products $c_i\Delta $ become integers:
\begin{equation}\label{e5}
  \sum_{i=0}^{k-1}c_i\Delta  [(1+p^{im})x+y,\,{}_{l}z]_G=\Delta  w_0+\beta _{k}\Delta w_{k}+\beta _{k+1}\Delta w_{k+1}+\cdots .
\end{equation}
 Then the substitution makes sense in $L$ for the left-hand side of \eqref{e5}, in which each term becomes
$c_i\Delta  [(1+p^{im})b+n,\,{}_{l}g]_G=1_G=0_L$, for every $i=0,\dots ,k-1$.
However, before we can interpret this substitution in $L$ for the right-hand side of \eqref{e5}, we need to consider the coefficients $\beta _i$. For that we continue using  calculation with the free variables.

Let us say for brevity that a rational number $t/s$, where $t,s\in \Z$, is a \textit{$p$-integer} if the denominator $s$ is coprime to $p$; the $p$-integers are naturally contained in the $p$-adic integers~$\Z_p$.

\begin{lemma}\label{l-beta}
  All the coefficients $\beta _i$ in \eqref{e4} are $p$-integers.
\end{lemma}

\begin{proof}
Recall that the coefficients $\beta_i$ appear after taking the linear combination of the equations~\eqref{e3} with coefficients $c_0,\dots,c_k$ given by the first row of the inverse matrix $V^{-1}$, where
$$
V=\begin{bmatrix}
    1 & 1 & 1 & \cdots & 1 \\[1mm]
    1 & 1+p^{m} & (1+p^{m})^2 & \cdots  & (1+p^{m})^k \\[1mm]
    1 & 1+p^{2m} & (1+p^{2m})^2  & \cdots  & (1+p^{2m})^k \\[1mm]
    \vdots  & \vdots  & \vdots  & \ddots & \vdots \\[1mm]
    1 & 1+p^{km} & (1+p^{km})^2  & \cdots  &(1+p^{km})^k
  \end{bmatrix}.
$$
The inverse matrix $V^{-1}$ is known explicitly (see, for example, \cite[\S\,1.2.3,  Exercise~40]{knu}). The first row  $(c_0,c_1,\dots ,c_{k-1})$ is given by the formulae
$$c_0=\frac{(1+p^m)(1+p^{2m})\cdots (1+p^{(k-1)m})}{(-p^{m})(-p^{2m})\cdots (-p^{(k-1)m})}$$
and
$$c_s=\frac{(1+p^m)(1+p^{2m})\cdots \widehat{ (1+p^{sm})}\cdots (1+p^{(k-1)m})}{p^{sm}(p^{sm}-p^{m})(p^{sm}-p^{2m})\cdots \widehat{(p^{sm}-p^{sm})} \cdots (p^{sm}-p^{(k-1)m})}$$
for $s=1,2,\dots, k-1$, where \ $\widehat{}$ \ means omitting this term.

We already have the $k$ equations with $p$-integer values
$$\beta_0=\sum_{i=0}^{k-1}c_i  =1 $$ and
$$\beta_t=\sum_{i=0}^{k-1}c_i (1+p^{im})^t=0\qquad \text{for}\quad t=1,\dots ,k-1 $$
given by $V^{-1}$.
We need to show that the similar sums with $t\geq k$ are also $p$-integers.
Expanding the powers  $(1+p^{im})^t$ into powers of $p^{im}$ and using the fact that $ \sum_{i=0}^{k-1}c_i  =1$
we see by induction on $t$ that the sums
 $$\beta_t=\sum_{i=0}^{k-1}c_i (1+p^{im})^t$$
 are $p$-integers for $t=1,2,\dots $ if and only if the sums
$$\sum_{i=1}^{k-1}c_i \big(p^{im}\big)^u$$
are $p$-integers for $u=1,2,\dots $ (note that the last sum starts from $i=1$).
We already have this property for $u=1,2,\dots ,k-1$.  It remains to consider the values $u\geq k$. But for these big values of $u$ in fact every product
$$c_i \big(p^{im}\big)^u$$
is a $p$-integer for any $u\geq k$  and every $i=1,2,\dots, k-1$. Indeed, the exponent of the highest power of $p$ dividing the denominator of $c_i$ is  $mi(2k-i-1)/2$, which is less than $mik\leq miu$.
\end{proof}

We return to the proof of Proposition~\ref{pr-unif}. Since all the coefficients $\beta _{k}, \beta _{k+1},\dots $ in \eqref{e5} are $p$-integers by Lemma~\ref{l-beta}, on substitution $x=b$, $y=n$, $z=g$ we obtain an equation in $L$:
 \begin{equation*}
 \begin{aligned}
 0&= \; \sum_{i=0}^{k-1}c_i\Delta   [(1+p^{im})b+n,\,{}_{l}g]_G \\[2mm]
 &=\;\Delta  w_0(b,n,g)+\beta _{k}\Delta w_{k}(b,n,g)+\beta _{k+1}\Delta w_{k+1}(b,n,g)+\cdots ,
  \end{aligned}
\end{equation*}
so that
 \begin{equation*}
\Delta  w_0(b,n,g)= -\beta _{k}\Delta w_{k}(b,n,g)-\beta _{k+1}\Delta w_{k+1}(b,n,g)-\cdots .
\end{equation*}
Since the additive group of $L$ is torsion-free, it follows that
 \begin{equation}\label{e6}
w_0(b,n,g)=-\beta _{k} w_{k}(b,n,g)-\beta _{k+1}w_{k+1}(b,n,g)-\cdots .
\end{equation}
Such an equation holds for every $k=1,2,\dots $ (with different sets of coefficients $\beta _i$ for different $k$). It follows that $w_0(b,n,g)=0$.
 Indeed, recall that $w_i(b,n,g)\in p^{f(i)}L$. Since the coefficients $\beta _{i}$ are $p$-integers, we obtain  that the right-hand side of \eqref{e6} belongs to
 $$p^{\min_{j\geq k}f(j)}L.$$
 Since $\lim _{i\to\infty} f(i)=\infty$ as noted in \eqref{e-lim}, we also have
 $$\lim _{k\to\infty} \min_{j\geq k} f(j)=\infty.$$
 Therefore the validity of \eqref{e6} for all $k\in \N$ implies that  $w_0(b,n,g)\in \bigcap_{i=0}^{\infty} p^iL=0$.

 As a result, we obtain $w_0(b,n,g)=0$, which by \eqref{e0} is equivalent to the desired equation $[n,\,{}_{l}g]_G=1$. This holds for any $n\in N$, and since $G/N$ is a finite $p$-group, it follows that $g$ is an Engel element of $G$.
\end{proof}

\section{Pronilpotent groups}

When $G$ is a pronilpotent group, the conclusion of the main Theorem~\ref{t-main} is equivalent to $G$ being locally nilpotent, and this is what we prove in this section.

\begin{theorem}
\label{t2}
Suppose that $G$ is a pronilpotent group admitting an elementary abelian group of coprime automorphisms $A$ of order $q^2$ for a prime $q$. If for each $a\in A^\#$ every element of the centralizer $C_G(a)$ has a countable Engel sink, then $G$
is locally nilpotent.
\end{theorem}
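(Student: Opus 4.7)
The plan is to reduce to the pro-$p$ case and then reassemble via the Cartesian product decomposition. Since $G$ is pronilpotent and $A$ is a $q$-group acting coprimely, $q\notin\pi(G)$, and hence $G=\prod_{p\ne q}G_p$, where each Sylow $p$-subgroup $G_p$ is $A$-invariant. The Engel sink hypothesis restricts to each $G_p$: for $x\in C_{G_p}(a)\subseteq C_G(a)$, iterated commutators $[y,\,{}_{n}x]$ with $y\in G_p$ remain in $G_p$, so $\mathscr{E}(x)\cap G_p$ is a countable Engel sink for $x$ inside $G_p$. It therefore suffices to prove each $G_p$ is locally nilpotent, with a class bound uniform in $p$, so that the product of finitely generated projections is nilpotent.

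For the pro-$p$ step, I would fix $p\ne q$ and a topologically finitely generated $A$-invariant closed subgroup $H\le G_p$, aiming to show $H$ is nilpotent with class bounded by a function of the number of generators, independent of $p$. As a finitely generated pro-$p$ group, $H$ has an open uniformly powerful subgroup; intersecting $A$-conjugates gives an $A$-invariant open uniformly powerful normal subgroup $U\le H$. Inside $U$, Proposition~\ref{pr-unif} applies: every element of $C_U(a)$, for $a\in A^\#$, has a countable Engel sink in $U$ and is therefore Engel in $U$. Theorem~\ref{t-ass} then yields that $U$ is locally nilpotent, and since $U$ is finitely generated pro-$p$, it is nilpotent.

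To lift nilpotency of $U$ to nilpotency of $H$ with a bound independent of $p$, I would invoke Lie ring methods. Applying the Bahturin--Zaitsev theorem to the associated Lie ring of $H$ (with its induced $A$-action) transfers a polynomial identity from the fixed-point subalgebras---where the identity arises from the Engel/nilpotent centralizer data extracted from $U$---to the whole Lie algebra. Zelmanov's theorem on PI Lie algebras generated by ad-nilpotent elements then forces the Lie ring to be nilpotent of bounded class, and this translates back to a bounded nilpotency class of $H$. Consequently any topologically finitely generated closed subgroup $Y\le G$ projects to finitely generated $Y_p\le G_p$ of uniformly bounded class, so $Y\subseteq\prod Y_p$ is nilpotent and $G$ is locally nilpotent.

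The main obstacle is establishing the uniform class bound across primes. Nilpotency of an open subgroup does not automatically imply nilpotency of the ambient pro-$p$ group---for example, $\Z_p\rtimes\Z/p\Z$ with a nontrivial action has an open abelian subgroup but is not nilpotent---so the passage from $U$ to $H$ genuinely requires the Lie ring machinery above, not merely Proposition~\ref{pr-unif}. Moreover, the resulting class bound must be independent of $p$, since a Cartesian product of nilpotent pro-$p$ groups of unbounded classes fails to be nilpotent. Keeping the quantitative Zelmanov and Bahturin--Zaitsev bounds uniform in the characteristic is the delicate technical point of the argument.
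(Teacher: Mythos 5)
There are two genuine gaps in your argument, and the second one is the reason the paper's proof takes a different shape.

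First, the assertion ``as a finitely generated pro-$p$ group, $H$ has an open uniformly powerful subgroup'' is false in general: only $p$-adic analytic pro-$p$ groups have open powerful subgroups (a free pro-$p$ group of rank $2$ has none). Establishing $p$-adic analyticity is precisely the hard part, and in the paper it is the \emph{output} of the Lie-ring machinery, not an input: one first proves that $L_p(P)\otimes_{\Z}\Z[\zeta]$ is nilpotent (Lemmas~\ref{l-ad}, \ref{l-ad2}, \ref{l-pi} feeding into Zelmanov's Theorem~\ref{tz}), concludes via Lazard and Lubotzky--Mann that $P$ is $p$-adic analytic, and only then obtains the uniformly powerful open subgroup $U$ to which Proposition~\ref{pr-unif} and Theorem~\ref{t-ass} are applied. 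Your plan runs these steps in the reverse order, so the step you place first begs the question, and the Lie-ring step you place second (``lifting nilpotency from $U$ to $H$'') is left as a sketch of exactly the part that does the work.

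Second, the uniform-in-$p$ class bound that your reassembly of $G=\prod_p G_p$ requires cannot be extracted from the hypotheses. ``Countable Engel sink'' is a purely qualitative condition, and the quantities entering the pro-$p$ argument (the ad-nilpotency indices in Lemma~\ref{l-ad} depend on the element, the coset, and $|P/N|$; the polynomial identity from Lemma~\ref{l-pi0} depends on the group) are not uniform in $p$. You correctly identify this as the delicate point, but it is not merely delicate --- it is unobtainable by this route. The paper avoids it entirely: it never bounds nilpotency classes. Instead it proves that each $h\in C_G(a)$ is an Engel element of $G$, by using Lemma~\ref{l-engk} to produce, for the given $h$, a single Engel length $i+1$ valid simultaneously in all but finitely many Sylow components (those $G_q$ with $q$ outside the finite set $\pi(l)\cup\pi(k)$ determined by the coset and the power $h^k$), and handling the finitely many exceptional components individually via local nilpotency of each $G_p$ (Proposition~\ref{pr-pro-p}), where no uniformity is needed because the maximum is over a finite set. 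The conclusion then comes from Theorem~\ref{t-ass}, which converts ``every element of every $C_G(a)$ is Engel in $G$'' into local nilpotency of $G$ without any class bounds. To repair your proposal you would need to replace the uniform-bound strategy with this Engel-element reduction, or find some other way to pass from componentwise local nilpotency to local nilpotency of the Cartesian product.
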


The proof of Theorem~\ref{t2} is mostly about the case where $G$ is a pro-$p$ group. While the special case of uniformly powerful pro-$p$ groups is all but covered in the previous section, for arbitrary pro-$p$ groups we need different Lie ring methods, which we now describe.

For a prime number $p $, the \textit{Zassenhaus $p $-filtration} of a group $G$ (also called the \textit{$p $-dimension series}) is defined by
$$
G_i=\langle g^{p ^k}\mid g\in \gamma _j(G),\;\, jp ^k\geqslant i\rangle \qquad\text{for}\quad i\in \N .
$$
This is indeed a \textit{filtration} (or an \textit{$N$-series}, or a \textit{strongly central series}) in the sense that
\begin{equation}\label{e-fil}
[G_i,G_j] \leqslant G_{i+j}\qquad \text{for all}\quad i, j.
 \end{equation}

 Then the Lie ring $D_p (G)$ is defined with the additive group
$$
D_p(G)=\bigoplus _{i}G_i/G_{i+1},
$$
where the  factors $Q_i=G_i/G_{i+1}$ are additively written. The Lie product is defined on homogeneous elements $xG_{i+1}\in Q_i$, $yG_{j+1}\in Q_j$ via the group commutators by
$$
[xG_{i+1},\, yG_{j+1}] = [x, y]G_{i+j+1}\in Q_{i+j}
$$
and extended to arbitrary elements of $D_p(G)$ by linearity. Condition~\eqref{e-fil} ensures that this  product is well-defined, and group commutator identities imply that $D_p(G)$ with these operations is a Lie ring. Since all the factors $G_i/G_{i+1}$ have prime exponent~$p $, we can view $D_p(G)$ as a Lie algebra over the field of $p $ elements $\mathbb{F}_p $. We denote  by $L_p (G)$ the subalgebra generated by the first factor $G/G_2$. (Sometimes, the notation $L_p (G)$ is used for $D_p (G)$.) If $u\in G_i\setminus G_{i+1}$, then we define $\delta  (u)=i$ to be the \textit{degree} of  $u$ with respect to the Zassenhaus filtration.

The proofs of the Wilson--Zelmanov theorem \cite[Theorem~5]{wi-ze} on local nilpotency of profinite Engel groups, as well as of Theorems~\ref{t-ass} and \ref{t-20}, were  based on the following deep result of Zelmanov \cite{ze92,ze95,ze17}, which is also used in our paper.

\begin{theorem}[{Zelmanov \cite{ze92,ze95,ze17}}]\label{tz}
 Let $L$ be a Lie algebra over a field and suppose that $L$ satisfies
a polynomial identity. If $L$ can be generated by a finite set $X$ such that every
commutator in elements of $X$ is ad-nilpotent, then $L$ is nilpotent.
\end{theorem}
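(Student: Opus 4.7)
First, using that $G$ is pronilpotent, I decompose $G$ as the Cartesian product of its Sylow pro-$p$ subgroups. By Lemma~\ref{l-inv} each Sylow pro-$p$ subgroup $G_p$ can be taken $A$-invariant, and coprimeness forces $q\ne p$; by Lemma~\ref{l-cover} the Engel-sink hypothesis on centralizers descends to each $G_p$. Since a pronilpotent profinite group is locally nilpotent precisely when each of its Sylow subgroups is, this reduces the theorem to the case where $G$ itself is a pro-$p$ group with $A$ acting coprimely and satisfying the centralizer Engel-sink condition.

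Assuming this reduction, I would take an arbitrary topologically finitely generated closed $A$-invariant subgroup $H\le G$ and prove $H$ is nilpotent. The natural tool is the graded $\mathbb{F}_p$-Lie algebra $L_p(H)$ associated to the Zassenhaus filtration, together with Zelmanov's Theorem~\ref{tz}: if $L_p(H)$ satisfies a polynomial identity and is generated by finitely many elements all of whose Lie commutators are ad-nilpotent, then $L_p(H)$ is nilpotent, whence by standard Lazard-type results $H$ itself is nilpotent as a finitely generated pro-$p$ group.

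The $A$-invariant generating set of $H$ can be chosen, via Lemma~\ref{l-q2} applied to finite $A$-invariant quotients together with Lemma~\ref{l-cover}, to consist of elements lying in the centralizers $C_H(a)$ for $a\in A^\#$. The crux is to verify that for any such $g\in C_H(a)$ the image $\bar g\in L_p(H)$ is ad-nilpotent. This is where Proposition~\ref{pr-unif} is to be used: starting from Lemma~\ref{l-engk}, which provides an open subgroup $N$ and integers $i,k$ with $[[nb,\,{}_{i}g],g^k]=1$ for all $n\in N$, I would pass through uniformly powerful open subgroups (or suitable uniformly powerful sections) of $H$ to promote the countable-Engel-sink hypothesis to a genuine Engel-type identity on elements of $C_H(a)$. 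Reading this identity off in the Zassenhaus filtration then yields ad-nilpotency of $\bar g$ and of all iterated Lie commutators in such generators.

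Once ad-nilpotency is in place, the fixed subalgebra $C_{L_p(H)}(a)$ is generated by ad-nilpotent elements and so satisfies an Engel polynomial identity; by the Bahturin--Zaitsev theorem this upgrades to a polynomial identity on the whole of $L_p(H)$, and Zelmanov's Theorem~\ref{tz} applies to give $L_p(H)$ nilpotent, hence $H$ nilpotent. The main obstacle will be precisely the ad-nilpotency step: the link between countable Engel sinks in the group and Lie-theoretic ad-nilpotency in the graded algebra is not formal, and locating or constructing the uniformly powerful sections of $H$ in which Proposition~\ref{pr-unif} applies---and then transferring the resulting Engel relations back to information about $L_p(H)$---is where the bulk of the technical work will live.
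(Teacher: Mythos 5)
Your proposal does not address the statement at all. Theorem~\ref{tz} is a purely Lie-theoretic assertion: a Lie algebra over a field satisfying a polynomial identity and generated by finitely many elements, all commutators in which are ad-nilpotent, is nilpotent. There are no groups, no automorphisms, and no profinite structure in its hypotheses or conclusion. What you have sketched instead is an argument for the pronilpotent case of the paper's main result (essentially Theorem~\ref{t2} / Proposition~\ref{pr-pro-p}): you reduce to pro-$p$ groups, pass to the Zassenhaus filtration, establish ad-nilpotency of generators, invoke Bahturin--Zaitsev for a polynomial identity, and then \emph{apply} Theorem~\ref{tz} to conclude. That is, your "proof" of the statement uses the statement itself as its key ingredient, which is circular; it proves a different theorem that sits downstream of the one you were asked to prove.

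The gap is therefore not a missing step but a category error. Theorem~\ref{tz} is one of Zelmanov's deep results, proved in \cite{ze92,ze95,ze17} by a long analysis of sandwich algebras and PI Lie algebras; the present paper does not prove it and only cites it (as do the proofs of the Wilson--Zelmanov theorem and of Theorem~\ref{t-ass}). None of the group-theoretic machinery you deploy --- Sylow decompositions of pronilpotent groups, Lemma~\ref{l-engk}, Proposition~\ref{pr-unif} on uniformly powerful pro-$p$ groups --- bears on the internal structure of an abstract PI Lie algebra generated by ad-nilpotent elements. If the task is to supply a proof of Theorem~\ref{tz}, the only honest options are to reproduce Zelmanov's argument or to cite it as the paper does; your sketch does neither.
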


The following lemma is derived from the Baire Category Theorem~\ref{bct}, the Bahturin--Zaitsev theorem \cite{bah-zai} on Lie algebras with automorphisms whose fixed points satisfy a polynomial identity, and the Wilson--Zelmanov theorem~\cite[Theorem~1]{wi-ze} on Lie algebras of groups satisfying a coset identity.

  \begin{lemma}[{\cite[ Proposition 2.6]{acc-shu16}}] \label{l-pi0}
 Assume that a finite group $F$ acts coprimely on a profinite group~$G$ in such a manner that $C_G(F)$ is locally nilpotent. Then for each prime~$p$ the Lie algebra $L_p(G)$ satisfies a multilinear polynomial identity.
  \end{lemma}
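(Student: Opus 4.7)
The plan is to show that the fixed subalgebra $C_{L_p(G)}(F)$ satisfies a multilinear polynomial identity and then invoke the Bahturin--Zaitsev theorem to transfer the identity to all of $L_p(G)$. This split is natural: $F$ acts coprimely on the Lie algebra $L_p(G)$, because $|F|$ is prime to $|G|$ and hence to $p$, so $|F|$ is invertible in the ground field $\mathbb{F}_p$ and Bahturin--Zaitsev \cite{bah-zai} is available.

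The first step is to extract a coset identity inside $C_G(F)$ by a Baire argument. Since $C_G(F)$ is locally nilpotent it is in particular Engel, so one has the countable closed cover
\[
C_G(F)\times C_G(F) = \bigcup_{n\geqslant 1} S_n, \qquad S_n = \{(x,y) : [x,{}_n y] = 1\}.
\]
Theorem~\ref{bct} then supplies a positive integer $N$, an open subgroup $H \leqslant C_G(F)$ and elements $a, b \in C_G(F)$ with $[ah_1, {}_N bh_2] = 1$ for all $h_1,h_2 \in H$. Shrinking $H$, I would arrange $H \supseteq C_G(F)\cap K$ for some $F$-invariant open normal subgroup $K$ of $G$, so that the coset identity is compatible with the Zassenhaus filtration inherited from $G$.

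The second step is to translate this coset identity into a polynomial identity on $C_{L_p(G)}(F)$. Since $F$ acts coprimely on each finite $p$-quotient $G/G_i$, Lemma~\ref{l-cover} gives $C_{G_i/G_{i+1}}(F) = C_{G_i}(F)G_{i+1}/G_{i+1}$ for every $i$, and hence
\[
C_{L_p(G)}(F) \;=\; \bigoplus_{i\geqslant 1} \frac{C_G(F)\cap G_i}{C_G(F)\cap G_{i+1}},
\]
which is precisely the graded Lie algebra of $C_G(F)$ with respect to the induced $p$-filtration $\{C_G(F)\cap G_i\}_i$. Feeding the coset identity from the previous step into the Wilson--Zelmanov theorem~\cite[Theorem~1]{wi-ze} (the coset-identity version), one concludes that this graded algebra, i.e.\ $C_{L_p(G)}(F)$, satisfies a multilinear polynomial identity. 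The Bahturin--Zaitsev theorem then lifts this PI from the fixed subalgebra to all of $L_p(G)$.

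The main obstacle is the second step. Turning an Engel-type coset identity in the abstract group $C_G(F)$ into a bona fide polynomial identity on the graded object $C_{L_p(G)}(F)$ requires the Wilson--Zelmanov linearisation of iterated commutators along the filtration, and this only goes through if the open subgroup produced by Baire is compatible with the filtration coming from $G$ rather than merely from $C_G(F)$. The technical content of the lemma is arranging this compatibility, so that the graded algebra one controls really is $C_{L_p(G)}(F)$ and not some unrelated graded object built from the Zassenhaus filtration of $C_G(F)$ itself.
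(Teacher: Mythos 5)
Your proposal follows exactly the route the paper takes (the paper gives no proof of its own here, citing \cite[Proposition~2.6]{acc-shu16}, but it explicitly describes that proof as a combination of the Baire Category Theorem, the Wilson--Zelmanov coset-identity theorem, and the Bahturin--Zaitsev theorem, which is precisely your three-step argument): Baire applied to the Engel group $C_G(F)$ yields a coset identity, Wilson--Zelmanov converts it into a multilinear identity on the fixed-point graded algebra, and Bahturin--Zaitsev transfers it to $L_p(G)$ using that $|F|$ is invertible in $\mathbb F_p$. You also correctly identify the one genuinely delicate point, namely that the Wilson--Zelmanov machinery must be run for the filtration $\{C_G(F)\cap G_i\}$ induced from $G$ (so that the resulting graded object is $C_{D_p(G)}(F)\supseteq C_{L_p(G)}(F)$ by Lemma~\ref{l-cover}) rather than for the intrinsic Zassenhaus filtration of $C_G(F)$, which is exactly the technical content of the cited proposition.
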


We now consider the case of pro-$p$ groups.

\begin{proposition}
\label{pr-pro-p}
Suppose that $P$ is a finitely generated pro-$p$ group admitting an elementary abelian group of coprime automorphisms $A$ of order $q^2$  for a prime $q$. If for each $a\in A^\#$ every element of the centralizer $C_G(a)$ has a countable Engel sink, then $P$ is nilpotent.
\end{proposition}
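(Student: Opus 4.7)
The plan is to prove $P$ nilpotent by first using the Zassenhaus Lie algebra $L_p(P)$ and Zelmanov's Theorem~\ref{tz} to conclude that $L_p(P)$ is nilpotent, with Proposition~\ref{pr-unif} entering when we pass to a uniformly powerful open subgroup. Since $P$ is finitely generated pro-$p$, nilpotency of $L_p(P)$ forces the Zassenhaus filtration to terminate ($\bigcap_i P_i=1$), hence $P$ itself is nilpotent. As preparation, Lemma~\ref{l-q2} applied to every finite $A$-invariant quotient of $P$ yields $P=\langle C_P(a)\mid a\in A^{\#}\rangle$, and a finite topological generating set of $P$ can be chosen inside $\bigcup_{a\in A^{\#}}C_P(a)$.

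For the polynomial identity on $L_p(P)$, I would apply Lemma~\ref{l-pi0} with $F=A$, whose hypothesis is the local nilpotency of $C_P(A)$. Each element of $C_P(A)\le C_P(a)$ has a countable Engel sink, so Theorem~\ref{t-20} provides a finite normal subgroup $M\trianglelefteq C_P(A)$ with $C_P(A)/M$ locally nilpotent. Using Lemma~\ref{l-cover}, the coprime action of $A$, and the pro-$p$ structure, one arranges $M=1$ (for example, by quotienting $P$ by an $A$-invariant finite normal subgroup), whence Lemma~\ref{l-pi0} supplies a multilinear polynomial identity on $L_p(P)$.

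For ad-nilpotency of the generators, pick a finite topological generating set of $P$ inside $\bigcup_{a\in A^{\#}}C_P(a)$ and consider any commutator $g$ of these generators lying in some $C_P(a)$. Lemma~\ref{l-engk} gives integers $l,k$ and an open normal $N\le P$ with $[[nb,\,{}_{l}g],g^{k}]=1$ for all $n\in N$. To promote this coset identity to ad-nilpotency of the image of $g$ in $L_p(P)$, I would pass to an open $A$-invariant uniformly powerful subgroup $U$ of $P$---available once one knows $P$ is $p$-adic analytic (e.g.\ from PI-type information on $L_p(P)$)---and invoke Proposition~\ref{pr-unif}: every element of $C_U(a)\le C_P(a)$ has a countable Engel sink in $U$ and is therefore Engel in $U$. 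Via the Baker--Campbell--Hausdorff correspondence this translates into ad-nilpotency in the $\Z_p$-Lie algebra of $U$, and the standard comparison with the Zassenhaus filtration transports ad-nilpotency back to $L_p(P)$. Theorem~\ref{tz} then yields $L_p(P)$ nilpotent, and consequently $P$ is nilpotent.

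The principal obstacle I expect is bridging the ``countable Engel sink'' data, which by Lemma~\ref{l-engk} is only a single-coset identity, to the two global facts required: local nilpotency of $C_P(A)$ for Lemma~\ref{l-pi0}, and ad-nilpotency of generators of $L_p(P)$. The latter is precisely where Proposition~\ref{pr-unif} is indispensable: working inside $L_p(P)$ alone would produce only a coset identity, so one genuinely needs the detour through a uniformly powerful subgroup, where Engel sinks become Engel behaviour via the $\Z_p$-Lie algebra. The remaining steps---Zelmanov's theorem, Baker--Campbell--Hausdorff, and the triviality of $\bigcap_i P_i$ in a finitely generated pro-$p$ group---are the standard Lie-theoretic machinery.
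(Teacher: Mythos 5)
Your proposal has two genuine gaps, both at the heart of the argument. First, your route to ad-nilpotency is circular: you propose to pass to a uniformly powerful open subgroup $U$, which you say is ``available once one knows $P$ is $p$-adic analytic (e.g.\ from PI-type information on $L_p(P)$)''. But a polynomial identity on $L_p(P)$ does not give $p$-adic analyticity; for a finitely generated pro-$p$ group, $p$-adic analyticity is equivalent to the \emph{nilpotency} of $L_p(P)$ (Lazard), which is exactly what Zelmanov's theorem is supposed to deliver \emph{after} you have ad-nilpotent generators. Moreover, even granting $U$, Proposition~\ref{pr-unif} only makes elements of $C_U(a)$ Engel, with an index $n(x,g)$ depending on $x$; that is not ad-nilpotency, which requires a uniform index. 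The paper avoids both problems by proving ad-nilpotency directly in $L_p(P)$ (Lemma~\ref{l-ad}): the coset identity of Lemma~\ref{l-engk} already has a \emph{uniform} Engel length $i$ over the whole coset $Nb$, and the commutator identity $[[xy,\,{}_iz],t]=[[x,\,{}_iz],t]\cdot[[y,\,{}_iz],t]\cdot v(x,y,z,t)$ together with degree estimates in the Zassenhaus filtration kills the error term, yielding $[\bar h,\,{}_{i+m+p^k}\bar g]=0$. You also gloss over the fact that a commutator of generators taken from different $C_P(a)$'s need not lie in any $C_P(b)$; the paper has to extend scalars to $\Z[\zeta]$, work with common eigenvectors for $A$, and invoke Lemma~\ref{l-l5} to handle sums of ad-nilpotent elements.

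Second, your concluding step is false: nilpotency of $L_p(P)$ does not force $P$ to be nilpotent. (The Zassenhaus filtration of any finitely generated pro-$p$ group already intersects in $1$; and $p$-adic analytic pro-$p$ groups, e.g.\ open subgroups of $\mathrm{SL}_n(\Z_p)$, have nilpotent $L_p$ without being nilpotent.) Nilpotency of $L_p(P)$ only buys $p$-adic analyticity, hence a characteristic open uniformly powerful subgroup $U$. It is only at this point that Proposition~\ref{pr-unif} enters in the paper: it upgrades countable Engel sinks in $C_U(a)$ to genuine Engel elements, Theorem~\ref{t-ass} then makes $U$ nilpotent, so $P$ is soluble, and the proof finishes by induction on the derived length (using Theorem~\ref{t-20} on $V\langle g\rangle$ for an abelian characteristic subgroup $V$). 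This entire second half is missing from your outline, and without it the proposal does not reach the conclusion.
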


The plan of the proof of this proposition is as follows. First we prove that the Lie algebra $L_p(P)$ is nilpotent (which is actually proved for the Lie algebra obtained by extension of the ground field). This will mean that the group $P$ is $p$-adic analytic and therefore has a characteristic  subgroup that is a uniformly powerful pro-$p$ group. By Proposition~\ref{pr-unif}, an element with a countable Engel sink in a uniformly powerful pro-$p$ group is actually an Engel element. Then the application of Theorem~\ref{t-ass} gives nilpotency of the uniformly powerful subgroup and therefore the solubility of $P$. The proof is completed by induction on the derived length.

We begin with ad-nilpotency of homogeneous elements of $C_{L_p(P)}(a)$ for $a\in A^\#$.

\begin{lemma}\label{l-ad}
For each $a\in A^\#$ every homogeneous element $\bar g$ of $L_p(P)$ that belongs to $C_{L_p(P)}(a)$ is ad-nilpotent.
\end{lemma}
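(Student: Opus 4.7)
The plan is to lift $\bar g$ to a group element of $C_P(a)$, apply Lemma~\ref{l-engk} to extract an Engel-sink relation on a coset, and then transfer that relation into the restricted Lie algebra $L_p(P)$.

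Let $i=\deg\bar g$. Since $G_{i+1}$ is characteristic in $P$, hence $A$-invariant, Lemma~\ref{l-cover} applied to $\langle a\rangle$ acting on $G_i$ furnishes a lift $g\in C_{G_i}(a)$ with $\bar g=gG_{i+1}$; so $g\in C_P(a)$ has a countable Engel sink. By Lemma~\ref{l-engk}, and more precisely by its proof, there are $l,k\in\mathbb N$, a coset $Nb$ of an open normal subgroup $N$ (which we may take to be $A$-invariant after intersecting with its $A$-conjugates), and an element $s_j$ of the Engel sink satisfying $[nb,\,{}_l g]=s_j$ for every $n\in N$ and $[s_j,g^k]=1$. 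Because $P$ is pro-$p$, writing $k=p^rk'$ with $(k',p)=1$ the power $g^{p^r}$ lies in the closed procyclic subgroup $\overline{\langle g^k\rangle}$ (since $k'$ is $p$-adically invertible there), so it too centralises $s_j$; we may therefore assume $k=p^r$.

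Transfer now to $L_p(P)$. The restricted-Lie-algebra identity $(\operatorname{ad}\bar g)^{p^r}=\operatorname{ad}(\bar g^{[p^r]})=\operatorname{ad}(\overline{g^{p^r}})$ converts the relation $[[nb,\,{}_l g],g^{p^r}]=1$ into
\[(\operatorname{ad}\bar g)^{l+p^r}(\overline{nb})=0\qquad\text{for every }n\in N.\]
The stronger constancy $[nb,\,{}_l g]=[b,\,{}_l g]$ refines this: a degree comparison in the Zassenhaus filtration shows $(\operatorname{ad}\bar g)^l(\bar n)=0$ for every $n\in N$ with $\deg n\le\deg b$, and since $G_M\subseteq N$ for some $M$ this furnishes a finite-codimension subspace on which $(\operatorname{ad}\bar g)^l$ vanishes.

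The principal obstacle is propagating this partial vanishing to uniform ad-nilpotency on the whole of $L_p(P)$. For this I would exploit that $L_p(P)$ is generated as a restricted Lie algebra by $G/G_2$, so every homogeneous element arises from the lower layers by iterated Lie brackets together with the $p$-mapping $\bar x\mapsto\bar x^{[p]}=\overline{x^p}$. The restricted identity $(\operatorname{ad}\bar g)^{p^s}=\operatorname{ad}(\overline{g^{p^s}})$ controls the ad-action on $p$-power images, while the Leibniz formula $(\operatorname{ad}\bar g)^n[\bar u,\bar v]=\sum_{k=0}^n\binom{n}{k}\bigl[(\operatorname{ad}\bar g)^k\bar u,(\operatorname{ad}\bar g)^{n-k}\bar v\bigr]$ controls it on Lie brackets. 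Combining these with the codimension bound in the low layers (controlled by $[G:N]$) yields a uniform integer $m$ with $(\operatorname{ad}\bar g)^m=0$ on $L_p(P)$, the required ad-nilpotency of $\bar g$.
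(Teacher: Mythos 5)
Your opening moves (lifting $\bar g$ to $g\in C_P(a)$ via Lemma~\ref{l-cover}, invoking Lemma~\ref{l-engk}, reducing the power of $g$ to $p^r$) coincide with the paper's, but the transfer to $L_p(P)$ has a genuine gap. First, the family of relations $(\mathrm{ad}\,\bar g)^{l+p^r}(\overline{nb})=0$ for $n\in N$ is almost vacuous: whenever $\delta(n)>\delta(b)$ we have $nb\equiv b \pmod{P_{\delta(b)+1}}$, so $\overline{nb}=\bar b$, and the whole family collapses to information about homogeneous elements of degree at most $\delta(b)$. Your ``refinement'' makes this worse rather than better: restricting to $\deg n\le\deg b$ yields vanishing only on a finite-\emph{dimensional} piece concentrated in low degrees (empty if $b\notin P_2$), not on a finite-codimension subspace. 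What is actually needed is the relation $[\bar n,\,{}_{l'}\bar g]=0$ for the images of \emph{all} $n\in N$, in particular in arbitrarily high degrees, and extracting it requires a collection identity that separates the contributions of $n$ and $b$. The paper uses $[[xy,\,{}_i z],t]=[[x,\,{}_i z],t]\cdot[[y,\,{}_i z],t]\cdot v(x,y,z,t)$, where every commutator occurring in $v$ involves $x$, $y$, $t$ and $z$ at least $i$ times; on substitution the error term then has degree exceeding that of $[\bar n,\,{}_i\bar g,\overline{g^{p^k}}]$ by at least $\delta(b)$, which is what kills it in the relevant graded component.

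Second, your proposed propagation from $N$ to all of $L_p(P)$ does not close. The Leibniz formula turns $(\mathrm{ad}\,\bar g)^t\bar u=(\mathrm{ad}\,\bar g)^t\bar v=0$ into $(\mathrm{ad}\,\bar g)^{2t-1}[\bar u,\bar v]=0$, so the nilpotency index you can certify grows with the weight of the commutator in the generators, and no uniform $m$ emerges from ``generation by $P/P_2$ plus the $p$-map.'' The paper bridges this with a purely group-theoretic step you are missing: if $|P/N|=p^m$, then $[h,\,{}_m g]\in N$ for \emph{every} $h\in P$, so the relation on $N$ upgrades to $[[h,\,{}_{i+m}g],g^{p^k}]=v([h,\,{}_m g],b,g,g^{p^k})^{-1}$ for all $h\in P$, and the degree count in $P_d/P_{d+1}$ with $d=\delta(h)+(i+m+p^k)\delta(g)$ gives $[\bar h,\,{}_{i+m+p^k}\bar g]=0$ for every homogeneous $\bar h$, i.e.\ ad-nilpotency of index $i+m+p^k$. (Had you first established $[\bar n,\,{}_{l'}\bar g]=0$ for all $n\in N$, an alternative finish would be to note that $P_M\le N$ for some $M$ and that $(\mathrm{ad}\,\bar g)^{M}$ maps every homogeneous element into degree at least $M$; but as written neither that hypothesis nor a workable propagation is in place.)
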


\begin{proof}
By Lemma~\ref{l-cover} the element $\bar g$ is the image of some element $g\in C_P(a)$  in the corresponding factor  $P_{\delta (g)}/P_{\delta (g)+1}$ of the Zassenhaus filtration, where $\delta (g)$ is the degree of~$g$. We fix the notation $g$ and $\bar g$ for the rest of the proof of this lemma.

By Lemma~\ref{l-engk} for any element $g\in C_P(a)$ there are positive integers $i,s$ and  a coset $Nb$ of an open normal subgroup $N$ such that
$$
[[nb,\,{}_ig],g^{s}]=1\qquad \text{for all}\quad n\in N.
$$
Since $P$ is a pro-$p$ group, we can assume that $s$ is a power of $p$, so that
\begin{equation}\label{eq-ad}
[[nb,\,{}_ig],g^{p^k}]=1\qquad \text{for all}\quad n\in N.
\end{equation}

For generators $x,y,z,t$ of a free group, write
$$
[[xy,\,{}_i z],t]=[[x,\,{}_i z],t]\cdot [[y,\,{}_i z],t]\cdot v(x,y,z,t),
$$
where the word $v(x,y,z,t)$ is a product of commutators of weight at least $i + 3$, each of which involves $x$, $y$, $t$ and involves $z$ at least $i$ times. Substituting $x=n$, $y=b$, $z=g$, and $t=g^{p^k}$ and using \eqref{eq-ad} we obtain that
$$
[[n,\,{}_ig],g^{p^k}]=v(n,b,g,g^{p^k})^{-1}
\qquad \text{for all}\;\, n\in N.
$$
If $|P/N|=p^m$, then for any $h\in P$ we have $[h,\,{}_mg]\in N$, so that we also have
\begin{equation}\label{eq-ad3}
[[h,\,{}_{i+m}g],g^{p^k}]=v([h,\,{}_mg],b,g,g^{p^k})^{-1}.
\end{equation}
We claim that $\bar g$ is ad-nilpotent in $L_p(P)$ of index $i+m+p^k$.

Recall that $\delta  (u)$ denotes the degree of an element $u \in P$ with respect to the Zassenhaus filtration.  It is known that
\begin{equation}\label{e-pd}
u^p\in P_{p\delta (u)}.
\end{equation}
  Furthermore, in $L_p(P)$ for the images of $u$ and $u^p$ in  $P_{\delta (u)}/P_{\delta (u)+1}$ and $P_{p\delta (u)}/P_{p\delta (u)+1}$, respectively, we have
\begin{equation}\label{e-pd2}
[x, \bar{u^p}]=[x,\,{}_p \bar{u}]
\end{equation}
(see, for example, \cite[Ch.~II, \S\,5, Exercise~10]{bou}).
By  \eqref{e-pd}  the degree of $v([h,\,{}_ma],b,g,g^{p^k})$ on the right of~\eqref{eq-ad3} is at least $\delta (b)+\delta (h)+(i+m+p^k)\delta (g)$,
   strictly greater than $d=\delta (h)+(i+m+p^k)\delta (g)$. This means that the image of the right-hand side of \eqref{eq-ad3}  in $P_d/P_{d+1}$ is trivial. At the same time, by \eqref{e-pd2} the image of the left-hand side of \eqref{eq-ad3}  in $P_d/P_{d+1}$ is equal to the image of $[h,\,{}_{i+m+p^k} g]$ in $P_d/P_{d+1}$, which is in turn equal to the  element    $[\bar h,\,{}_{i+m+p^k}\bar g]$ in $L_p(P)$.
 Thus, for the corresponding homogeneous elements of $L_p(P)$ we have
 $$
 [\bar h,\,{}_{i+m+p^k}\bar g]=0.
$$
 Since here $\bar h$ can be any homogeneous element, we obtain that $\bar g$ is ad-nilpotent of index $i+m+p^k$, as claimed.
\end{proof}

The Lie algebra $L_p(P)$ is generated by its first homogeneous component $L_1=P/(P^p[P,P])$, which is the additively written Frattini quotient of $P$. By Lemmas~\ref{l-q2} and  \ref{l-cover} we have
$$L_1=\sum_{a\in A^\#}C_{L_1}(a).$$

Thus we obtain a finite set of generators of $L_p(P)$ that are ad-nilpotent by Lemma~\ref{l-ad}. But we also need all commutators in the generators to be ad-nilpotent. We cannot, however, say that a commutator  in elements of $C_{L_1}(a)$ for different $a\in A^\#$ is again an element of $C_{L_p(P)}(b)$ for some $b\in A^\#$. Instead, we extend the ground field by a primitive $q$-th  of unity $\zeta$ by forming $\tilde L=L_p(P)\otimes_{\Z}\Z[\zeta ]$ and choose a generating set of $\tilde L $ consisting of common eigenvectors for $A$.  We shall prove that $\tilde L$ is nilpotent using Theorem~\ref{tz}. This will obviously imply the nilpotency of $L_p(P)$.

The next two lemmas confirm that the hypotheses in Theorem~\ref{tz} are satisfied for $\tilde L$.

\begin{lemma}\label{l-ad2}
The Lie algebra $\tilde L$ is generated by finitely many elements all commutators in which are ad-nilpotent.
\end{lemma}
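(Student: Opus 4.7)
The plan is to apply Zelmanov's theorem (Theorem~\ref{tz}). I will exhibit a finite generating set of $\tilde L$ consisting of common $A$-eigenvectors, and show that every Lie commutator in the generators lies in a centralizer $C_{\tilde L}(a)$ for some $a\in A^\#$, so that an extension of Lemma~\ref{l-ad} to $\tilde L$ yields ad-nilpotency.

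Since $|A|=q^2$ is coprime to $p$ and $\F_p[\zeta]$ contains a primitive $q$th root of unity, the group algebra $\F_p[\zeta][A]$ splits as a direct sum of copies of $\F_p[\zeta]$ indexed by the characters $\chi\colon A\to\langle\zeta\rangle$, and $\tilde L$ decomposes accordingly into weight spaces $\tilde L=\bigoplus_{\chi}\tilde L_{\chi}$. Because $P$ is finitely generated, $\tilde L_1=L_1\otimes\F_p[\zeta]$ is finite-dimensional over $\F_p[\zeta]$, so the union $X$ of bases of the components $\tilde L_{1,\chi}$ is a finite generating set of $\tilde L$ consisting of common eigenvectors. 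Since a Lie bracket of eigenvectors is again an eigenvector (with weight the product of the weights), every commutator in elements of $X$ lies in some $\tilde L_{\chi}$ and is Zassenhaus-homogeneous. The kernel of any character of $A$ has order at least $q\geq 2$, so I can choose $a\in\ker\chi\cap A^\#$, giving $\tilde L_{\chi}\subseteq C_{\tilde L}(a)=C_{L_p(P)}(a)\otimes_{\F_p}\F_p[\zeta]$.

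It then suffices to show that every Zassenhaus-homogeneous element $v$ of $C_{\tilde L}(a)$ is ad-nilpotent in $\tilde L$. Writing $v=\sum_k w_k\otimes\zeta^k$ with each $w_k\in C_{L_p(P)}(a)$ of the same Zassenhaus degree as $v$, Lemma~\ref{l-ad} gives that each $w_k$, and more generally every iterated Lie commutator in the $w_k$'s (which again lies in $C_{L_p(P)}(a)$ and is homogeneous), is ad-nilpotent in $L_p(P)$. The Lie subalgebra $M=\langle w_k\rangle\subseteq C_{L_p(P)}(a)$ is finitely generated, and inherits a polynomial identity from $L_p(P)$ via Lemma~\ref{l-pi0}; Zelmanov's theorem applied to $M$ therefore forces $M$ to be nilpotent of some class $c$.

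The main obstacle is to deduce from the nilpotency of $M$ and the individual ad-nilpotency of its homogeneous elements that the $\F_p[\zeta]$-combination $v$ itself acts nilpotently on the whole of $\tilde L$. The idea is to exploit the Zassenhaus grading: since $M$ is nilpotent of class $c$, any iterated bracket $[w_{j_1},[w_{j_2},\ldots,[w_{j_{c+1}},\cdot]]]$ vanishes, so expanding $\mathrm{ad}(v)^N$ by the Jacobi identity reduces each term to a composition $\mathrm{ad}(u_1)\cdots\mathrm{ad}(u_s)$ of adjoint actions of iterated commutators $u_j\in M$ of bounded length, each of which is ad-nilpotent by Lemma~\ref{l-ad}. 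Choosing $N$ large in terms of $c$ and the ad-nilpotent indices of the generators of $M$, and tracking the grading shifts carefully, forces the composition to vanish. This bookkeeping on graded degrees together with the Engel-type argument on the subalgebra $M$ is the technical heart of the proof.
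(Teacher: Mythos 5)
Your overall architecture matches the paper's: decompose $\tilde L_1$ into common eigenspaces for $A$, take eigenvector generators, observe that brackets of eigenvectors are eigenvectors lying in some $C_{\tilde L}(a)$ with $a\in A^\#$ (because $A$ is non-cyclic), and reduce to showing that a homogeneous element $v=\sum_k \zeta^k w_k$ of $C_{\tilde L}(a)$ with $w_k\in C_{L_p(P)}(a)$ is ad-nilpotent. One genuine divergence is how you get nilpotency of the subalgebra generated by the components: you apply Zelmanov's theorem to $M=\langle w_k\rangle$ (using the PI inherited from $L_p(P)$ and the ad-nilpotency of all commutators in the $w_k$ from Lemma~\ref{l-ad}), whereas the paper gets it more cheaply from the fact that $C_P(a)$ is locally nilpotent by Theorem~\ref{t-20}, hence $C_{\tilde L}(a)$ is a locally nilpotent Lie algebra. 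Your route works but invokes the heaviest tool in the paper at a point where it is not needed.

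The genuine gap is the final step, which you yourself flag as ``the technical heart'' and do not carry out: deducing that $v$ is ad-nilpotent \emph{on all of $\tilde L$} from the nilpotency of $M$ together with the ad-nilpotency of the commutators in its generators. This is a real lemma, not bookkeeping to be waved at --- it is exactly \cite[Lemma~5]{khu-shu99}, quoted in the paper as Lemma~\ref{l-l5}, and the paper's proof of Lemma~\ref{l-ad2} consists essentially of verifying its hypotheses (finitely many generators, nilpotency class $c$, a uniform ad-nilpotency index $t$ for the finitely many nonzero commutators of weight at most $c$) and citing it. Moreover, your sketch starts from a false premise: nilpotency of $M$ of class $c$ gives $[w_{j_1},\dots,w_{j_{c+1}}]=0$ for arguments \emph{inside} $M$, but it does not make $[w_{j_1},[w_{j_2},\dots,[w_{j_{c+1}},x]]]$ vanish for arbitrary $x\in\tilde L$, since $[w_{j_{c+1}},x]$ need not lie in $M$. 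The correct argument (as in Lemma~\ref{l-l5}) rewrites $\mathrm{ad}(v)^N$ as a sum of ordered products $\mathrm{ad}(\rho_1)^{e_1}\cdots\mathrm{ad}(\rho_r)^{e_r}$ over the finitely many nonzero commutators $\rho_i$ in the generators --- finiteness being where nilpotency of $M$ enters --- and then uses the pigeonhole principle together with $\mathrm{ad}(\rho_i)^{t}=0$ to kill every term for $N$ large. Your later sentences gesture at this, but as written the step would not survive refereeing without either supplying that rewriting argument in full or citing the lemma.
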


\begin{proof}
The Lie algebra $\tilde L$ is generated by its first homogeneous component
$\tilde L_1=L_1\otimes_{\Z}\Z[\zeta ]$, which is a finite $p$-group. Since the ground field of $\tilde L$ is a splitting field for the linear transformations induced by $A$ and $A$ is abelian of coprime order, every $L_i$ is a sum of common eigenspaces for $A$.  In particular, we can choose finitely many generators of  $\tilde L$ among common eigenvectors for $A$ in $ \tilde L_1$. Note that since $A$ is non-cyclic, every common eigenspace for $A$ is contained in one of the centralizers $C_{\tilde L}(a)$ for $a\in A^\#$.

Therefore, since a commutator in common eigenvectors for $A$ is again a common eigenvector for $A$, every commutator of weight $i$ in these generators belongs to $C_{\tilde L_i}(a)$ for some  $a\in A^\#$. It remains to prove that any  element  $u\in C_{\tilde L_i}(a)$ is ad-nilpotent, for each $a\in A^\#$ and for any $i$.

Clearly, $C_{\tilde L_i}(a)=C_{L_i}(a)\otimes_{\Z}\Z[\zeta ]$. Hence,
$$u=v_0+\zeta v_1+\zeta ^2v_2+\cdots +\zeta ^{q-2}v_{q-2}$$
for some $v_i\in C_{L_i}(a)$. Each of the summands $\zeta ^iv_i$ is ad-nilpotent by Lemma~\ref{l-ad}. A~sum of ad-nilpotent elements need not be
ad-nilpotent in general. But in our case, we shall see that these summands generate a
nilpotent subalgebra, and we shall be able to apply the following lemma.

\begin{lemma}[{\cite[Lemma~5]{khu-shu99}}]\label{l-l5}
  Suppose that $M$ is a Lie algebra, $H$ is a subalgebra of $M$
generated by $s$ elements $h_1, \dots , h_s$ such that all commutators in the $h_i$ are ad-nilpotent of index $t$. If $H$ is nilpotent of class $c$, then for some $(s, t, c)$-bounded number $\e $, we have
$$[M, \underbrace{H,\dots  , H}_\e]=0.$$
\end{lemma}

Recall that we are proving that $u=v_0+\zeta v_1+\zeta ^2v_2+\cdots +\zeta ^{q-2}v_{q-2}$ is ad-nilpotent, where $v_i\in C_{L_i}(a)$.
Let $H$ be the Lie subalgebra generated by the elements $v_0, \zeta v_1, \zeta ^2v_2, \dots , \zeta ^{q-2}v_{q-2}$. It is contained in $C_{\tilde L}(a)=C_{L_p(P)}(a)\otimes_{\Z}\Z[\zeta ]=\bigoplus C_{L_i}(a)\otimes_{\Z}\Z[\zeta ]$. By Lemma~\ref{l-cover} each $C_{L_i}(a)$ is the image of a subgroup  of $C_{P}(a)$ in the corresponding factor of the Zassenhaus $p$-filtration. All elements of the pro-$p$ group $C_{P}(a)$ have countable Engel sinks by hypothesis, and therefore $C_{P}(a)$ is locally nilpotent by Theorem~\ref{t-20}.  It follows that
$C_{L_p(P)}(a)$ and $C_{\tilde L}(a)$ are also locally nilpotent. In particular, the Lie subalgebra $H=\langle v_0, \zeta v_1,  \dots , \zeta ^{q-2}v_{q-2}\rangle$ is nilpotent of certain class $c$. There are only finitely many commutators of weight at most $c$ in the generators $v_0, \zeta v_1,  \dots , \zeta ^{q-2}v_{q-2}$, each of which has the form $\zeta ^k w$, where $w$ is a homogeneous element of $L_p(P)$ contained in $C_{L_p(P)}(a)$. By Lemma~\ref{l-ad} all such commutators are ad-nilpotent, so taking $t$ to be the maximum of their ad-nilpotency indices we can apply Lemma~\ref{l-l5} to obtain that $u=v_0+\zeta v_1+\zeta ^2v_2+\cdots +\zeta ^{q-2}v_{q-2}$ is also ad-nilpotent.
\end{proof}

\begin{lemma}\label{l-pi}
  The Lie algebra $\tilde L$ satisfies a polynomial identity.
  \end{lemma}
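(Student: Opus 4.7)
The plan is to reduce to Lemma~\ref{l-pi0} via a single non-identity element of~$A$, and then to promote the resulting identity from $L_p(P)$ to~$\tilde L$ by a routine multilinearity argument.

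First, I would fix any $a\in A^{\#}$. The cyclic group $\langle a\rangle$ has order~$q$ and acts coprimely on the pro-$p$ group~$P$, since $p\ne q$. By hypothesis every element of $C_P(\langle a\rangle)=C_P(a)$ has a countable Engel sink, and exactly as in the proof of Lemma~\ref{l-ad2} this forces $C_P(a)$ to be locally nilpotent via Theorem~\ref{t-20} (the finite normal subgroup produced by that theorem is contained in some term $Z_c(C_P(a))$ because a finite $p$-group normal in a pro-$p$ group is centralised by a suitable iterated commutator, so local nilpotency of the quotient lifts to local nilpotency of $C_P(a)$ itself). With this in hand, Lemma~\ref{l-pi0} applied with $F=\langle a\rangle$ and $G=P$ delivers a multilinear Lie polynomial $f(x_1,\dots,x_n)$ over $\mathbb{F}_p$ that vanishes identically on $L_p(P)$.

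Next I would transfer $f$ to $\tilde L=L_p(P)\otimes_{\Z}\Z[\zeta]$. This is automatic from multilinearity: any $\tilde y_i\in\tilde L$ can be written as a finite $\Z[\zeta]$-linear combination of elements of $L_p(P)$, and $f(\tilde y_1,\dots,\tilde y_n)$ expands, by multilinearity in each slot, into a $\Z[\zeta]$-linear combination of values of $f$ on tuples from $L_p(P)$, each of which is already $0$. Hence $\tilde L$ satisfies the identity~$f$.

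No serious obstacle arises; the one point worth flagging is that it is essential that Lemma~\ref{l-pi0} yields a \emph{multilinear} identity (and not merely a polynomial identity in general), because for arbitrary polynomial identities the passage under extension of scalars is not automatic.
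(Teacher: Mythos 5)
Your proposal is correct and follows essentially the same route as the paper: fix one $a\in A^\#$, observe that $C_P(a)$ is locally nilpotent (the paper gets this from the countable-Engel-sink hypothesis via Theorem~\ref{t-20}, exactly as you argue), invoke Lemma~\ref{l-pi0} to get a multilinear identity on $L_p(P)$, and pass to $\tilde L$ by multilinearity under the scalar extension. Your extra remarks -- that the finite normal subgroup from Theorem~\ref{t-20} sits in the hypercenter of the pro-$p$ group $C_P(a)$, and that multilinearity is what makes the base change automatic -- are both correct and in fact supply details the paper leaves implicit.
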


\begin{proof}
  Choose any $a\in A^\#$. The centralizer $C_{P}(a)$ is locally nilpotent by Theorem~\ref{t-ass}.  Then $L_p(P)$ satisfies a multilinear polynomial identity by Lemma~\ref{l-pi0}. Since the Lie algebra $L_p(G)$ satisfies a multilinear polynomial identity, the same identity holds on $\tilde L$.
\end{proof}

\begin{proof}[Proof of Proposition~\ref{pr-pro-p}] Recall that  $P$ is a finitely generated pro-$p$ group admitting an elementary abelian group of coprime automorphisms $A$ of order $q^2$ for a prime $q$ such that for each $a\in A^\#$ every element of the centralizer $C_G(a)$ has a countable Engel sink; we need to show that $P$ is nilpotent. By Lemmas~\ref{l-pi} and \ref{l-ad2} the Lie algebra $\tilde L=L_p(P)\otimes_\Z\Z[\zeta ]$ satisfies the hypotheses of Theorem~\ref{tz}, by which $\tilde L$ is nilpotent, and therefore the Lie algebra $L_p(P)$ is also nilpotent.
The nilpotency of $L_p (P)$  for the finitely generated pro-$p $ group $P$  is equivalent to $P$ being a $p$-adic analytic group. This result goes back to Lazard~\cite{laz}; see also \cite[Proposition~D]{sha}. In terms of the Lubotzky--Mann theory of powerful pro-$p$ groups \cite{lub-man2}, another result of Lazard \cite[III, 3.1.3, 3.4.4]{laz} states that a finitely generated pro-$p$ group  $P$
is $p$-adic analytic if and only if it has a powerful subgroup of finite index, and therefore $P$ has a characteristic open subgroup $U$, which is a uniformly powerful pro-$p$ group of finite rank (see \cite[Corollaries 4.3 and 8.34]{anal}).

By Proposition~\ref{pr-unif}, elements with countable Engel sinks in a uniformly powerful pro-$p$ group are actually Engel elements. Therefore $A$ acts on $U$ in  such a way that for each $a\in A^\#$ every element of the centralizer $C_U(a)$ is an Engel element. By Theorem~\ref{t-ass} the group $U$ is nilpotent. As a result, the group $P$ is soluble.

We now complete the proof by induction on the derived length of $P$.
By induction hypothesis, $P$ has an abelian characteristic subgroup $V$ such that $P/V$ is  nilpotent. By Theorem~\ref{t-ass} it is sufficient to show that  for each $a\in A^\#$ every element of the centralizer $C_P(a)$ is an Engel element.   Since $P/V$ is nilpotent, it is sufficient to show that every element $g\in C_P(a)$ is an Engel element in the product $V\langle g\rangle$. Since all elements of $V\langle g\rangle$ have countable Engel sinks, the group $V\langle g\rangle$ is nilpotent by Theorem~\ref{t-20}, whence the result.\end{proof}

\begin{proof}[Proof of Theorem~\ref{t2}]
By
Theorem~\ref{t-ass},  it is sufficient to prove that for each $a\in A^\#$ every element $h\in C_G(a)$ is an Engel element in $G$.
For each prime~$p$, let $G_p$ denote the Sylow $p$-subgroup of $G$, so that $G$ is a Cartesian product of the~$G_p$, since $G$ is pronilpotent. Given any two elements $g\in G$ and $h\in C_G(a)$, we write $g=\prod _pg_p$ and $h=\prod _ph_p$,  where $g_p,h_p\in G_p$. Clearly, $[g_q,h_p]=1$ for $q\ne p$.

By Lemma~\ref{l-engk}, for the element $h\in C_G(a)$ there are positive integers $i,k$ and a coset $Nb$ of an open normal subgroup $N$  such that
\begin{equation}\label{e-eng2}
[[nb,\,{}_ih],h^{k}]=1\qquad \text{for all}\quad n\in N.
\end{equation}
Let $l$ be the (finite) index of $N$ in $G$. Then $N$ contains all Sylow $q$-subgroups of $G$ for $q\not\in \pi (l)$. Hence we can choose $b$ to be a $\pi(l)$-element. Let $\pi=\pi (l)\cup \pi (k)$; note that $\pi$ is a finite set of primes.

We claim that
$$
[g_q,\,{}_{i+1}h_q]=1\qquad \text{for }q\not\in \pi.
$$
 Indeed, since $b$ commutes with elements of $G_q$ for $q\not\in \pi$ and $G_q\leq N$,  by \eqref{e-eng2} we have
\begin{equation}\label{eq-engq2}
\begin{aligned}
 1=[g_qb,\,{}_ih],h^{k}]
 & = [[g_q,\,{}_ih],h^{k}]\cdot  [[b,\,{}_ih],h^{k}]\\
 & =[[g_q,\,{}_ih],h^{k}]\\
 &= [[g_q,\,{}_ih_q],h_q^{k}].
 \end{aligned}
\end{equation}
 Thus, $h_q^{k}$ centralizes $[g_q,\,{}_ih_q]$. Since $k$ is coprime to $q$, we have $ \langle h_q^{k}\rangle =\langle h_q\rangle$. Therefore \eqref{eq-engq2} implies that $[[g_q,\,{}_ih_q],h_q]=1$, as claimed.

 For every prime $p$ the group $G_p$ is locally nilpotent by Proposition~\ref{pr-pro-p},  so there is $k_p$ such that $[g_p,\,{}_{k_p}h_p]=1$.  Now for $m=\max\{i+1, \max_{p\in \pi} \{k_p\}\}$ we have $[g_p,\,{}_{m}h_p]=1$ for all $p$, which means that  $[g,\,{}_{m}h]=1$.
 Thus, for each $a\in A^\#$ every element  $h\in C_G(a)$ is an Engel element in $G$. Therefore $G$ is locally nilpotent by
 Theorem~\ref{t-ass}.
  \end{proof}

\section{Profinite groups}

First we recall some lemmas in \cite{khu-shu20} about Engel sinks of coprime automorphisms.

 \begin{lemma}[{\cite[Lemma~4.7]{khu-shu20}}]\label{l-copr2}
 Let $\varphi$ be a coprime automorphism of a pronilpotent
group~$F$ with a countable Engel sink $\mathscr E(\varphi)$ in the semidirect product $F\langle \varphi\rangle$. Then the set $K=\{ [g,\varphi]\mid g\in F\}$ is a finite smallest Engel sink of $\varphi$ in the semidirect product $F\langle \varphi\rangle$.
 \end{lemma}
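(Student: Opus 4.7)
The set $K=\{[g,\varphi]\mid g\in F\}$ is the image of the compact set $F$ under the continuous map $\tau\colon g\mapsto g^{-1}g^\varphi$, so $K$ is closed in $F$. The identity $[f\varphi^j,\varphi]=[f,\varphi]^{\varphi^j}$ together with $\varphi$-invariance of $K$ gives $[x,\varphi]\in K$ for every $x\in F\langle\varphi\rangle$, and one checks directly that $\tau(K)\subseteq K$. Hence $[x,{}_n\varphi]=\tau^{n-1}([x,\varphi])\in K$ for all $n\geq 1$, so $K$ is automatically an Engel sink of $\varphi$, irrespective of its size.

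The main task is to prove $K$ is finite. A short calculation shows that $\tau(g_1)=\tau(g_2)$ iff $g_1g_2^{-1}\in C_F(\varphi)$, so the fibers of $\tau$ are the left cosets of $C_F(\varphi)$ in $F$, and consequently $|K|=[F:C_F(\varphi)]$; finiteness of $K$ is therefore equivalent to $C_F(\varphi)$ being open in $F$. To force this openness I would apply Lemma~\ref{l-engk} to $\varphi\in F\langle\varphi\rangle$: after replacing $N$ by a $\varphi$-invariant open normal subgroup of $F$ and adjusting $b$ to lie in $F$ (legitimate since $[f\varphi^j,\varphi]=[f,\varphi]^{\varphi^j}$), one obtains $[nb,{}_i\varphi]\in C_F(\varphi^k)$ for every $n\in N$. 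Here $\varphi^k$ is itself a coprime automorphism and induces a finite-order coprime action on the closed $\varphi$-invariant subgroup $C_F(\varphi^k)$. Combined with the coprime decomposition $F=C_F(\varphi)\cdot[F,\varphi]$ (from Lemma~\ref{l-cover} in each finite quotient) and $[F,\varphi]=[[F,\varphi],\varphi]$ (Lemma~\ref{l-gff}), working in finite quotients $F/M$ with $M\leq N$ open $\varphi$-invariant normal yields a uniform bound on $[F/M:C_{F/M}(\varphi)]$. Taking inverse limits then gives $[F:C_F(\varphi)]<\infty$, so $K$ is finite.

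Once $K$ is known to be finite, a fiber count together with an induction on nilpotency class in each finite quotient shows that $K$ is a transversal to the cosets of $C_F(\varphi)$: the abelian base case follows from Maschke for the coprime action (making $\varphi-1$ invertible on $[F,\varphi]$), and in the inductive step any collision $k_1k_2^{-1}\in C_F(\varphi)$ with $k_1,k_2\in K$ descends modulo $Z(F)$ to a collision handled by the inductive hypothesis, so lies in $Z(F)$, where an $m$-th power argument with $m$ coprime to the orders of elements of $F$ finishes the job. Consequently $\tau|_K$ is a bijection of the finite set $K$, and every element of $K$ is $\tau$-periodic. Lemma~\ref{l-min} then provides a smallest Engel sink $\mathscr E_{\min}\subseteq K$; for any $k=[g,\varphi]\in K$ of $\tau$-period $m$ one has $k=\tau^{am}(k)=[g,{}_{1+am}\varphi]\in\mathscr E_{\min}$ for sufficiently large $a$, so $k\in\mathscr E_{\min}$, and $K=\mathscr E_{\min}$.

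The main obstacle is the uniform-bound step in the second paragraph: while the identification $|K|=[F:C_F(\varphi)]$ is formal, deducing finiteness of this index from the countable-sink hypothesis requires combining the Baire-furnished coset information of Lemma~\ref{l-engk} with the finite-order coprime action of $\varphi$ on $C_F(\varphi^k)$ carefully across all finite quotients of $F$, and is the delicate heart of the argument. The transversal claim underlying the bijectivity of $\tau|_K$ is also technical, though it reduces via pronilpotency to a purely finite-group induction on nilpotency class.
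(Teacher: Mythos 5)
First, a point of reference: the paper does not prove this lemma at all --- it is imported verbatim from \cite[Lemma~4.7]{khu-shu20} --- so there is no in-paper proof to compare against, and your argument has to stand on its own.

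Your first and third paragraphs are essentially sound: $K$ is a closed Engel sink because $[f\varphi^j,\varphi]=[f^{\varphi^j},\varphi]\in K$ and $\tau(K)\subseteq K$; the identification $|K|=[F:C_F(\varphi)]$ via the fibres of $\tau$ is correct; and the transversal claim (a purely finite-nilpotent induction on class, with the central collision killed by $h^{\varphi^m}=z^mh$ and coprimality) does make $\tau|_K$ a bijection, after which periodicity of points of a \emph{finite} $K$ gives minimality in the spirit of Lemma~\ref{l-min}. The genuine gap is the finiteness of $K$, which you flag but do not close. Lemma~\ref{l-engk} only tells you that the $i$-fold commutator map $x\mapsto[x,\,{}_i\varphi]$ sends a single coset $Nb$ into $C_F(\varphi^k)$; this map is not a homomorphism, its image lying in a subgroup bounds no index, and $C_F(\varphi^k)$ may be vastly larger than $C_F(\varphi)$. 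The sentence ``working in finite quotients \dots yields a uniform bound on $[F/M:C_{F/M}(\varphi)]$'' is precisely where the whole difficulty of the lemma lives, and no mechanism for producing such a bound is offered. Nor can the bijectivity of $\tau|_K$ rescue the argument without finiteness: on the inverse limit of the finite permutations it only makes every point of $K$ \emph{recurrent} (returning to every neighbourhood of itself), not periodic, so you cannot force $K$ into a given Engel sink and in particular cannot conclude that $K$ is countable. A viable endgame would be to show directly that every element of $K$ lies in the given countable sink $\mathscr E(\varphi)$ and then use that $K$ is homeomorphic to the coset space $F/C_F(\varphi)$, a homogeneous compact space which is therefore either finite or uncountable; but your outline establishes neither the countability of $K$ nor the uniform bound, so the central assertion of the lemma remains unproved.
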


The next lemma was basically proved in \cite[Lemma~4.7]{khu-shu20}, but we cannot make a direct reference and therefore give a short proof.

 \begin{lemma}\label{l-copr3}
 Let $\varphi$ be a coprime automorphism of a  locally nilpotent
 profinite group~$F$. If  $\f$ has a countable Engel sink $\mathscr E(\varphi)$ in the semidirect product $F\langle \varphi\rangle$, then $\gamma _{\infty} (F\langle \varphi\rangle)$ is finite  and  $\gamma _{\infty}(F\langle \varphi\rangle)= [F,\varphi]$.
 \end{lemma}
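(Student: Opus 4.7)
The plan splits into two halves: first identify $\gamma_{\infty}(F\langle\f\rangle)$ with $[F,\f]$, and then prove that this commutator subgroup is finite. Throughout, $F$ is pronilpotent: in a locally nilpotent profinite group every finite quotient is nilpotent, because any finite generating set of the quotient lifts to an abstract subgroup that is nilpotent by hypothesis.

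For the identification, Lemma~\ref{l-gff} gives $[F,\f]=[[F,\f],\f]$; iterating yields $[F,\f]\le\gamma_n(F\langle\f\rangle)$ for every $n$, and hence $[F,\f]\le\gamma_{\infty}(F\langle\f\rangle)$. For the reverse inclusion, modulo $[F,\f]$ the image of $\f$ commutes with every image of an element of $F$, so $F\langle\f\rangle/[F,\f]$ decomposes as the direct product of the pronilpotent group $F/[F,\f]$ with the procyclic group $\langle\f\rangle$. Such a direct product is again pronilpotent, so $\gamma_{\infty}$ vanishes in it.

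For finiteness of $M:=[F,\f]$, the starting point is Lemma~\ref{l-copr2}, which applies to the pronilpotent $F$ and gives that $K=\{[g,\f]\mid g\in F\}$ is finite. Pass to the abelianisation $\bar M=M/[M,M]$, which is $\f$-invariant and still satisfies $[\bar M,\f]=\bar M$. In the abelian compact group $\bar M$ the continuous map $\bar m\mapsto[\bar m,\f]$ is a homomorphism, so its set-theoretic image is already a closed subgroup, namely $[\bar M,\f]=\bar M$ itself; but this image is also contained in the image of the finite set $K$ in $\bar M$, so $\bar M$ is finite. Decompose $M$ as the Cartesian product $\prod_p M_p$ of its Sylow subgroups, so $\bar M=\prod_p \bar M_p$, and only finitely many $\bar M_p$ are non-trivial. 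For a prime $p$ with $\bar M_p=1$, the pro-$p$ group $M_p$ is perfect, so every finite quotient is a perfect finite $p$-group and hence trivial, whence $M_p=1$. For each of the remaining finitely many primes, $M_p/[M_p,M_p]$ is finite, so the Frattini quotient is finite and $M_p$ is topologically finitely generated; local nilpotency makes the closure of the abstract subgroup generated by a set of topological generators nilpotent of the same class, so $M_p$ itself is nilpotent, and a finitely generated nilpotent group with finite abelianisation is finite by the standard induction on the lower central series. Therefore $M=\prod_p M_p$ is finite.

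The main obstacle is precisely this last step, where finiteness of $\bar M$ must be upgraded to finiteness of $M$ in a setting where ``finitely generated'' is only available topologically. The crux is that in a locally nilpotent pro-$p$ group a topologically finitely generated subgroup is in fact nilpotent (via closures of nilpotent abstract subgroups), which lets the abelian bound propagate up the lower central series one prime at a time; without this input, finiteness of $\bar M$ would not be strong enough to control $M$.
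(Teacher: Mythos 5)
Your proof is correct and follows essentially the same route as the paper's: Lemma~\ref{l-copr2} supplies the finite set $K$, Lemma~\ref{l-gff} gives $[[F,\varphi],\varphi]=[F,\varphi]$, the homomorphism trick on the abelianisation shows it is covered by the image of $K$ and hence finite, local nilpotency upgrades this to finiteness of $[F,\varphi]$, and pronilpotency of the quotient identifies $\gamma_{\infty}(F\langle\varphi\rangle)$ with $[F,\varphi]$. The only cosmetic difference is that the paper gets nilpotency of $[F,\varphi]$ in one step from $[F,\varphi]=\langle K\rangle$ with $K$ finite (so it is topologically finitely generated from the outset), which renders your detour through the Sylow decomposition and the Frattini argument unnecessary.
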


 \begin{proof}
By Lemma~\ref{l-copr2}, the set $K=\{ [g,\varphi]\mid g\in F\}$ is finite. Then the commutator subgroup $[F,\varphi ]=\langle K\rangle$  is nilpotent, since $F$ is locally nilpotent. By Lemma~\ref{l-gff},
\begin{equation}\label{e-ff}
[[F,\varphi],\varphi]=[F,\varphi].
\end{equation}

Let $V$ be the quotient of  $[F,\varphi ]$ by its derived subgroup. For any $u,v\in V$ we have $[uv,\varphi ]=[u,\varphi][v,\varphi]$, since $V$ is abelian, and $[V,\varphi ]=V$ by \eqref{e-ff}. Hence $V$ consists of the images of elements of $K$, and therefore is finite. Then the nilpotent group $[F,\varphi]$ is also finite (see, for example, \cite[5.2.6]{rob}).

The quotient $F\langle \varphi\rangle/[F,\varphi]$ is  the direct product of the images of $F$ and $\langle \varphi\rangle$ and therefore is pronilpotent. Hence,  $\gamma _{\infty}(F\langle \varphi\rangle)\leq [F,\varphi]$ and in view of
 \eqref{e-ff} we obtain
$\gamma _{\infty}(F\langle \varphi\rangle)= [F,\varphi]$. In particular, $\gamma _{\infty}(G\langle \varphi\rangle)$ is finite.
 \end{proof}

\begin{proof}[Proof of Theorem~\ref{t-main}]
We now embark on the proof of the main result.
Recall that $G$ is a profinite group admitting a coprime elementary abelian group of automorphisms $A$ of order $q^2$ for a prime $q$ such that  for each $a\in A^\#$ all elements in $C_G(a)$ have countable Engel sinks. We want to prove that $G$ is finite-by-(locally nilpotent).

By Theorem~\ref{t2} any pronilpotent $A$-invariant section of $G$ is locally nilpotent. In particular, every Sylow $p$-subgroup of $G$ is locally nilpotent, since there is an $A$-invariant Sylow $p$-subgroup. The next lemma extends Lemma~\ref{l-copr3}.

\begin{lemma}\label{l-511}
Suppose that a section $S=FC$ of the group $G$ is a product of a normal locally nilpotent subgroup $F$ and a subgroup $C$ such that every element of $C$ has a countable Engel sink in $S$. Then
$\g _{\infty}(F\langle g\rangle)$ is finite for every $g\in S$.
\end{lemma}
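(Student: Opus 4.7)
The plan is to reduce to Lemma~\ref{l-copr3} via a coprime decomposition. For any $g\in S$, write $g=fc$ with $f\in F$ and $c\in C$; since $f\in F$, we have $F\langle g\rangle=F\langle c\rangle$, so it suffices to show $\gamma_\infty(F\langle c\rangle)$ is finite for every $c\in C$. The element $c$ carries its countable Engel sink from $S$ down to $F\langle c\rangle$ by intersection, and applying Lemma~\ref{l-engk} inside $F\langle c\rangle$ yields positive integers $i,k$ and a coset $Nb$ of an open normal subgroup $N\trianglelefteq F\langle c\rangle$ such that $[[nb,{}_ic],c^k]=1$ for all $n\in N$.

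I will aim to prove that $[F,c]$ is finite. This suffices, because $c$ then centralizes $F/[F,c]$, making $F\langle c\rangle/[F,c]$ a central extension of the procyclic group $\overline{\langle c\rangle}/(\overline{\langle c\rangle}\cap[F,c])$ by the locally nilpotent $F/[F,c]$, hence itself locally nilpotent; therefore $\gamma_\infty(F\langle c\rangle)\leq [F,c]$ is finite.

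To control $[F,c]$, split $c=c_1c_2$ inside the procyclic group $\overline{\langle c\rangle}$, where $c_1$ is the $\pi(F)$-part and $c_2$ is the $\pi(F)'$-part. These elements commute, and $c_2$ acts on $F$ as a coprime automorphism. The first technical step is to verify that $c_2$ has a countable Engel sink in $F\langle c_2\rangle$; once that is in place, Lemma~\ref{l-copr3} applies and delivers that $[F,c_2]$ is finite. For the remaining part I would work in the quotient $F\langle c\rangle/[F,c_2]$, in which $c_2$ centralizes $F$, and analyze the action of $c_1$. Since $F\langle c_1\rangle$ is a pro-$\pi(F)$ group containing the locally nilpotent $F$ with procyclic $\pi(F)$-quotient, I would argue Sylow-by-Sylow using the local nilpotency of $F$ together with the Engel-sink behaviour of $c_1$ inherited from that of $c$, to conclude $[F,c_1]$ is finite. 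Combining, $[F,c]\leq [F,c_1]\cdot [F,c_2]$ is finite.

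The main obstacle is the inheritance of countable Engel sinks for the coprime part $c_2$ (and analogously for $c_1$). Topologically, $c_2$ is only a limit of integer powers of $c$, and while continuity of the commutator map transfers individual identities, the Engel-sink condition is about unboundedly long nested commutators, so pointwise continuity alone does not suffice. I expect to resolve this by exploiting the uniformity of the identity in Lemma~\ref{l-engk}, which is valid on a coset of an open normal subgroup, combined with a Baire-category argument in the compact group $S$: decomposing $F\langle c_2\rangle$ as a countable union of closed sets indexed by potential sink values, and extracting an open set of elements whose iterated commutators with $c_2$ fall into a countable collection.
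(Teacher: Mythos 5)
There is a genuine gap, and it is located exactly where your sketch is vaguest. Your strategy reduces everything to proving that $[F,c]$ is finite (split as $[F,c_1]\cdot[F,c_2]$), but this intermediate claim is false under the hypotheses of the lemma. Take $F=\prod_{i\in\mathbb N}C_{p^2}$ and let $c$ act on each coordinate by $x\mapsto x^{1+p}$, with $S=F\langle c\rangle$ and $C=\langle c\rangle$: then $S$ is nilpotent of class $2$, every element of $C$ is Engel (sink $\{1\}$), and $\gamma_\infty(F\langle c\rangle)=1$, yet $[F,c]=F^p$ is infinite. So no argument can deliver finiteness of $[F,c_1]$ for the non-coprime part $c_1$; the ``Sylow-by-Sylow'' step you defer is not a technicality but an impossibility as stated. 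What is actually true, and what the paper proves, is that only the \emph{coprime} part of the action survives in $\gamma_\infty$: for each prime $p$ one writes $h=h_ph_{p'}$ and shows $\gamma_\infty(P\langle h\rangle)=[P,h_{p'}]$, the $p$-part $h_p$ being absorbed into the locally nilpotent pro-$p$ group $P\langle h_p\rangle$, which contributes nothing. Your target should have been $\gamma_\infty$, not the commutator subgroup.

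Even after repairing that, a second essential ingredient is missing. Applying Lemma~\ref{l-copr3} globally to the $\pi(F)'$-part $c_2$ does not dispose of all coprime actions: for distinct primes $p,q\in\pi(F)$ the $q$-part of $c_1$ still acts coprimely on the Sylow $p$-subgroup of $F$, so one is forced back to a prime-by-prime analysis, obtaining a finite group $\gamma_\infty(P\langle h\rangle)$ for each $p$. The product of these over infinitely many primes could a priori be infinite, and nothing in your proposal rules this out. This is precisely where the countability of the Engel sink of $h$ in $S$ is used in the paper: if infinitely many primes contributed nontrivially, then (using Lemma~\ref{l-min} to keep the local components alive) the continuum of subsets of that set of primes would produce continuum many distinct elements in any Engel sink of $h$, a contradiction. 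Finally, the issue you flag as the ``main obstacle'' --- transferring a countable sink to $c_2$ --- is in fact a non-issue: since $C$ is a closed subgroup, $c_1,c_2\in\overline{\langle c\rangle}\leq C$, so the hypothesis applies to them directly and no Baire-category argument is needed there.
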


\begin{proof} Write $g=fh$ for $f\in F$ and $h\in C$.
 Since  $F\langle g\rangle=F\langle h\rangle$, we work with $h\in C$.
For a prime $p$, let $P$ be a Sylow $p$-subgroup of $F(G)$, and write $h=h_ph_{p'}$, where $h_p$ is a $p$-element and $h_{p'}$ a $p'$-element both lying in $C$, and $[h_p,h_{p'}]=1$. Then $P\langle h_p\rangle$ is a normal Sylow $p$-subgroup of  $P\langle h\rangle$, on which $h_{p'}$ induces by conjugation a coprime automorphism. By Lemma~\ref{l-copr3} the subgroup  $[P,h_{p'}]=[P\langle h_p\rangle,h_{p'}]=\g _{\infty}(P\langle h\rangle)$ is finite.

The quotient  $P\langle h\rangle/\g _{\infty}(P\langle h\rangle)$ is the direct product of the images of $\langle h_{p'}\rangle$ and $P\langle h_p\rangle$. The latter is a  pro-$p$ group, which is locally nilpotent by Theorem~\ref{t2} as noted above.
  Thus, the quotient  $P\langle h\rangle/\g _{\infty}(P\langle h\rangle)$ is locally nilpotent, and therefore we can choose a finite smallest Engel sink $\mathscr E_p(h)\subseteq \gamma _{\infty}(P\langle h\rangle)$ of $h$ in $P\langle h\rangle$.

Note that
\begin{equation}\label{e-equiv}
   \text{if}\quad \mathscr E_p(h)=\{ 1\}, \quad\text{then}\quad \gamma _{\infty}(P\langle h\rangle)=1.
\end{equation}
Indeed, if  $\mathscr E_p(h)=\{ 1\}$, then, in particular, the image $\bar h$ of $h$ in $\langle h\rangle/C_{\langle h\rangle}([P,h_{p'}])$ is an Engel element of the finite group $[P,h_{p'}]\langle \bar h\rangle$ and therefore $\bar h$ is contained in its Fitting subgroup by  Baer's theorem  \cite[12.3.7]{rob}. Then
$$
\gamma _{\infty}(P\langle h\rangle)=[P,h_{p'}]=[[P,h_{p'}],h_{p'}]=[[P,h_{p'}],\bar h_{p'}]=1.
$$

By Lemma~\ref{l-min}, for every $s\in \mathscr E_p(h)$ we have $s=[s,\,{}_kh]$ for some $k\in {\mathbb N}$, and then also
\begin{equation}\label{e-cycl}
s=[s,\,{}_{kl}h]\quad \text{for any}\;\, l\in {\mathbb N}.
\end{equation}

We claim that $\mathscr E_p(h)=\{1\}$ for all but finitely many primes $p$. Suppose the opposite, and $\mathscr E_{p_i}(h)\ne \{1\}$ for each prime $p_i$ in an infinite set of primes~$\pi$. Choose a nontrivial element $s_{p_i}\in \mathscr E_{p_i}(h)$ for every $p_i\in \pi$. For any subset $\sigma\subseteq \pi$, consider the (infinite) product
$$
s_{\sigma}=\prod _{p_j\in \sigma} s_{p_j}.
$$
Note that the elements $s_{p_j}$ commute with one another  belonging to different normal Sylow subgroups of $F$.  If  $\mathscr E(h)$ is any Engel sink of $h$ in $H$, then for some $k\in {\mathbb N}$ the commutator $[s_{\sigma},\,{}_kh]$ belongs to $\mathscr E(h)$. Because of the properties \eqref{e-cycl}, all the components of $[s_{\sigma},\,{}_kh]$ in the Sylow $p_j$-subgroups of $F$ for $p_j\in \sigma$ are non-trivial, while all the other components in Sylow $q$-subgroups for $q\not\in \sigma$ are trivial by construction. Therefore for different subsets $\sigma\subseteq \pi$ we thus obtain different elements of  $\mathscr E(h)$. The infinite set $\pi$ has continuum of different subsets, whence $\mathscr E(h)$ is uncountable, contrary to $h$ having a countable Engel sink by the hypothesis.

Thus, for all but finitely many primes $p$ we have $\mathscr E_p(h)=\{1\}$, which is the same as $\gamma _{\infty}(P\langle h\rangle)=1$ by \eqref{e-equiv}. Therefore the subgroup
$$
\gamma _{\infty}(F\langle g\rangle)=\gamma _{\infty}(F\langle h\rangle)=\prod_p\gamma _{\infty}(P\langle h\rangle)
$$
 is finite.
\end{proof}

Our next step in the proof of Theorem~\ref{t-main} is proving that $G$ has an open locally nilpotent subgroup. Recall that by Theorem~\ref{t2} any pronilpotent $A$-invariant section of $G$ is locally nilpotent. In particular, the largest pronilpotent normal subgroup of an $A$-invariant section $H$ is also the largest locally nilpotent normal subgroup of $H$, and we call it the Fitting subgroup $F(H)$ of $H$. Then further terms of the Fitting series are defined as usual by induction: $F_1(H)=F(H)$ and $F_{i+1}(H)$ is the inverse image of $F(H/F_i(H))$. A group $H$ has finite Fitting height $k$ if $F_k(H)=H$ for $k\in {\mathbb N}$ and $k$ is the least number with this property.

\begin{proposition}\label{pr-virt}
The group $G$ has an open $A$-invariant normal locally nilpotent subgroup.
\end{proposition}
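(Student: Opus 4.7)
My plan is to identify the Fitting subgroup $F = F(G)$ --- the unique maximal closed pronilpotent normal subgroup, which exists by a standard Zorn argument since closures of chains of pronilpotent normal subgroups are pronilpotent --- as the candidate for the required subgroup, and then to argue that $[G:F] < \infty$. The subgroup $F$ is automatically $A$-invariant (being characteristic), normal, and locally nilpotent by Theorem~\ref{t2} applied to $F$: its hypothesis holds because every element of $C_F(a)\subseteq C_G(a)$ inherits a countable Engel sink from $G$. Hence, once $[G:F] < \infty$ is established, $F$ itself is the sought open, $A$-invariant, normal, locally nilpotent subgroup.

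\textbf{Main technical step.} Fix $a \in A^\#$ and set $C = C_G(a)$. By Lemma~\ref{l-511} applied to the section $S = FC$ (with $F \lhd S$ locally nilpotent and $C$ consisting of elements with countable Engel sinks in $S$), for every $g \in C$ the subgroup $\gamma_\infty(F\langle g\rangle)$ is finite. I then pass from this pointwise finiteness to a uniform statement on an open subset. Lemma~\ref{l-engk} furnishes, for each $g \in C$, a quadruple $(i, k, N, b)$ witnessing a coset identity, and the Baire Category Theorem~\ref{bct} applied to the countable decomposition of $C$ indexed by such quadruples yields an open subgroup $U_a \leq C$ on which a single quadruple works. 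A second Baire application, now indexing closed subsets of $U_a$ by a countable family of finite $A$-invariant subgroups of $F$, shrinks $U_a$ further and produces a single finite $A$-invariant subgroup $K_a \leq F$ such that $\gamma_\infty(F\langle g\rangle) \leq K_a$ for every $g \in U_a$. Replacing $U_a$ by the intersection of its finitely many $A$-translates may be assumed, so $U_a$ itself is $A$-invariant.

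\textbf{Assembly and main obstacle.} By the profinite version of Lemma~\ref{l-q2} (which follows from its finite version applied in each finite quotient of $G$), the closed subgroup $H = \overline{F\langle U_a \mid a \in A^\#\rangle}$ has finite index in $G$ and is $A$-invariant. Setting $K = \prod_{a \in A^\#} K_a$, a finite $A$-invariant normal subgroup of $F$, I would argue that $H/K$ is pronilpotent, hence locally nilpotent by Theorem~\ref{t2}. Then $\gamma_\infty(H) \leq K$ is finite, so the open $A$-invariant subgroup $C_H(\gamma_\infty(H))$ is central-by-pronilpotent and therefore pronilpotent; its normal core in $G$ is the required open, $A$-invariant, normal, locally nilpotent subgroup. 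The principal obstacle is establishing that $H/K$ is pronilpotent: the pointwise condition $\gamma_\infty((F/K)\langle g\rangle) = 1$ for $g \in U_a$ does not immediately combine across generators into pronilpotency of $H/K$, and the proof must iterate Lemma~\ref{l-511} inside the section $H/K$ while exploiting the uniform coset identity on $U_a$. Handling this second iteration $A$-equivariantly, and confirming that the finite $A$-invariant subgroup $K_a$ can be chosen uniformly via the Baire decomposition, is where the bulk of the technical work lies.
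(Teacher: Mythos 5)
Your reduction to showing that the Fitting subgroup is open is legitimate (and consistent with how the proposition is used later in the paper), but the execution has two genuine gaps. First, neither Baire category application is justified: Theorem~\ref{bct} requires a \emph{countable} union of closed sets, whereas your index sets are not countable in general --- the quadruples $(i,k,N,b)$ range over all open normal subgroups $N$ of $G$, and the candidate subgroups $K_a$ range over all finite $A$-invariant subgroups of $F(G)$; both families are uncountable for a general (not necessarily countably based) profinite group. You also never verify that a set such as $\{g\in U_a:\ \gamma_\infty(F\langle g\rangle)\leq K\}$ is closed in $g$: since $\gamma_\infty$ is an intersection over all lower central terms with no uniformity in $g$, this is not evident. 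So the uniform finite subgroup $K_a$ is not actually produced. Second, and decisively, you yourself flag the step that $H/K$ is pronilpotent as an unresolved obstacle. That is exactly where the difficulty of the proposition lies: pointwise finiteness of $\gamma_\infty(F\langle g\rangle)$ over a generating set does not assemble into pronilpotency of the group those generators generate, and you offer no mechanism for making it do so. A proof that defers its central step is not yet a proof.

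The paper's route is different and supplies precisely the missing mechanism. It first applies Theorem~\ref{t-20} to each compact group $C_G(a)$ to pass to an open $A$-invariant subgroup $H$ in which every $C_H(a)$ is \emph{locally nilpotent}; Ward's theorem then forces the finite quotients of $H$ to be soluble of Fitting height at most $2$, so $H$ is prosoluble with $H=F_2(H)$. The heart of the argument (Lemma~\ref{l-51}) shows that each $F(H)C_H(a)$ is finite-by-(locally nilpotent): for $g=fh$ one works inside the $A$-invariant subgroup $M=F(H)\langle h^A\rangle$, whose quotient by $F(M)$ is finitely generated nilpotent, and one inductively strips off finite normal subgroups $\gamma_\infty\bigl(F(\bar M)\langle z_i^a\rangle\bigr)$ attached to central generators, lowering the nilpotency class of $\bar M/F(\bar M)$ at each step via Lemma~\ref{l-511} and Theorem~\ref{t2}. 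This finite iteration is what combines the pointwise information across generators. Finally, one chooses an open normal $A$-invariant $R$ meeting the finitely many finite subgroups $\gamma_\infty(F(H)C_H(a))$ trivially, deduces that every element of $C_R(a)$ is Engel in $R$ (using $F(R)C_R(a)$ locally nilpotent and $R/F(R)$ locally nilpotent), and concludes with Theorem~\ref{t-ass}. Note that your proposal uses neither Ward's theorem nor Theorem~\ref{t-ass}, both of which are essential to this argument.
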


\begin{proof}
For each $a\in A^\#$ the centralizer $C_G(a)$ is finite-by-(locally nilpotent)  by Theorem~\ref{t-20}. Hence $G$ has an $A$-invariant open subgroup $H$ such that each centralizer $C_H(a)$ for $a\in A^\#$ is locally nilpotent. By Lemma~\ref{l-cover} the same holds for every finite quotient of $H$ by an $A$-invariant open normal  subgroup.
By a theorem of Ward \cite{war73} such finite quotients are soluble and have Fitting height at most 2. Hence $H$ is prosoluble
and $h(H)\leq 2$.

\begin{lemma}\label{l-51}
  For each $a\in A^\#$ the subgroup $F(H)C_H(a)$ is finite-by-(locally nilpotent).
\end{lemma}

\begin{proof}
By Theorem~\ref{t-20}, it is sufficient to show that every element  $g\in F(H)C_H(a)$ has a finite Engel sink.  Since $C_H(a)$ is locally nilpotent, an Engel sink of $g$ in $F(H)\langle g\rangle$ is also an Engel sink of $g$ in $F(H)C_H(a)$.
By Lemma~\ref{l-511} the subgroup $\g _{\infty}(F(H)\langle g\rangle)$ is finite. If the pronilpotent quotient $F(H)\langle g\rangle/\g _{\infty}(F(H)\langle g\rangle)$ was locally nilpotent, we would obtain a finite Engel sink of $g$ in $\g _{\infty}(F(H)\langle g\rangle)$. But we cannot immediately apply Theorem~\ref{t2}, since this quotient is not $A$-invariant.

To work around this difficulty we need to consider $A$-invariant subgroups.  Write $g=fh$ for $f\in F(H)$ and $h\in C_H(a)$. Let
$$M=F(H)\langle g^A\rangle=F(H)\langle h^A\rangle,$$
where, as usual, $\langle x^A\rangle=\langle x^a\mid a\in A\rangle$. Since  $F(M)\geq F(H)$, it is sufficient to show that $g$ has a finite Engel sink in $M$. Since $M\leq H$, we have $M=F(M)C_M(a)$, the centralizer $C_M(a)$ is locally nilpotent, and by Lemma~\ref{l-cover} the same holds for any $A$-invariant section of $M$.

Since $C_H(a)$ is locally nilpotent, the subgroup $\langle h^ A\rangle$ is nilpotent, and therefore $M/F(M)$ is a finitely generated nilpotent group. We now construct by induction a finite series of nested finite normal subgroups $K_i$ of $M$ such that the nilpotency class of $(M/K_i)/F(M/K_i)$ diminishes at every step, up to class $0$ corresponding to the trivial group. As a result we will obtain a finite normal subgroup of $M$ with locally nilpotent quotient, thus proving that $g$ has a finite Engel sink in $M$.

As a basis of the inductive construction we put $K_0=1$. Suppose that we have already constructed an $A$-invariant finite normal subgroup $K_i$ such that  the nilpotency class of $\bar M/F(\bar M)$ is $c\geq 1$, where bars denote images in  $\bar M=M/K_i$.

Choose finitely many elements $z_1,\dots ,z_m$ whose images generate the centre of $\bar M/F(\bar M)$. This is possible, since $\bar M/F(\bar M)$ is a finitely generated nilpotent group.  Then each subgroup $F(\bar M)\langle z_i^a\rangle$ for $a\in A$ is normal in $\bar M$. The subgroups $\g _{\infty}(F(\bar M)\langle z_i^a\rangle)$ are also normal in $\bar M$.
By Lemma~\ref{l-511}
the subgroups $\g _{\infty}(F(\bar M)\langle z_i^a\rangle)$ are finite.
Therefore their product is a finite normal $A$-invariant subgroup
$$K=\prod _{i=1}^m\prod_{a\in A} \g _{\infty}(F(\bar M)\langle z_i^a\rangle)
,$$
and we define $K_{i+1}$ to be the  full inverse image of $K$.
Consider  the product
$$L=\prod _{i=1}^m\prod_{a\in A} F(\bar M)\langle z_i^a\rangle. $$
Its image $L/K$ in the quotient $\bar M/K$ is a product of  the images of the normal pronilpotent subgroups  $F(\bar M)\langle z_i^a\rangle / \g _{\infty}(F(\bar M)\langle z_i^a\rangle)$ and is therefore  pronilpotent. By Theorem~\ref{t2} the quotient $L/K$ is locally nilpotent. Hence the nilpotency class of $\bar M/F(\bar M)$ is less than $c$.

The above construction terminates when $M/K_k$ becomes locally nilpotent, at some finite step $k$. Thus, $M$  has a finite normal $A$-invariant subgroup $N=K_k$ with locally nilpotent quotient, and therefore the element $g$ has a finite Engel sink in $M$ contained in $N$. As a result, every element  of $F(H)C_H(a)$ has a finite Engel sink and therefore $F(H)C_H(a)$ is finite-by-(locally nilpotent) by Theorem~\ref{t-20}.
\end{proof}

We return to the proof of Proposition~\ref{pr-virt}. Since every subgroup $F(H)C_H(a)$ is finite-by-(locally nilpotent) by Lemma~\ref{l-51}, there is an open normal $A$-invariant subgroup $R$ such that $R\cap \gamma_\infty(F(H)C_H(a))=1$ for each $a\in A^\#$. Since $F(R)\leq F(H)$ and $C_R(a)\leq C_H(a)$, we have
$$
 \gamma_\infty(F(R)C_R(a))\leq \gamma_\infty(F(H)C_H(a))\cap R=1
 $$
for each $a\in A^\#$. By Theorem~\ref{t2} we obtain that  for each $a\in A^\#$  the subgroup $F(R)C_R(a)$ is locally nilpotent. Since $R/F(R)$ is also locally nilpotent, it follows that every element in $C_R(a)$ is Engel in $R$, for each $a\in A^\#$. Then $R$ is locally nilpotent by Theorem~\ref{t-ass}.
  \end{proof}

We now complete the proof of Theorem~\ref{t-main}. Since any pronilpotent $A$-invariant section of $G$ is locally nilpotent by Theorem~\ref{t2}, we only need to prove that $\g_{\infty}(G)$ is finite.  By Proposition~\ref{pr-virt} the quotient $G/F(G)$ by the Fitting subgroup is finite. We proceed by induction on the index of $F(G)$. If $G/F(G)$ has a proper normal $A$-invariant subgroup $B/F(G)$, then by the induction hypothesis $\g_{\infty}(B)$ is finite, and it remains to apply the induction hypothesis to $G/\g_{\infty}(B)$.

Thus, we can assume that $G/F(G)$ has no proper normal $A$-invariant subgroups.
First consider the case where $G/F(G)$ is abelian. Then $G=F(G)C_G(a)$ for some $a\in A^\#$. For every $g\in G$ the subgroup  $\g _{\infty}(F(G)\langle g\rangle)$ is finite by Lemma~\ref{l-511} and it is a normal subgroup of $F(G)$. Since $G/F(G)$ is finite, $\g _{\infty}(F(G)\langle g\rangle)$ has finitely many conjugates in $GA$ and their product $E$ is a finite normal $A$-invariant subgroup. The $A$-invariant  section
$F(G)\langle g^A\rangle/E$ is a product of the images of the normal pronilpotent subgroups $F\langle g^a\rangle/ \g _{\infty}(F\langle g^a\rangle)$ and therefore is pronilpotent and locally nilpotent by Theorem~\ref{t2}. Hence $g$ has a finite Engel sink contained in $E$. Thus, every element of $G$ has a finite Engel sink, and therefore $G$ is finite-by-(locally nilpotent) by Theorem~\ref{t-20}.

We now consider the case where $G/F(G)$ is a direct product of non-abelian finite simple groups. Let $p$ be a prime divisor of $|G/F(G)|$, and let $P$ be an $A$-invariant Sylow $p$-subgroup of $G$. By the induction hypothesis, $\g _{\infty}(F(G)P)$ is finite, and it is a normal $A$-invariant subgroup of $F(G)$. Since $G/F(G)$ is finite, $\g _{\infty}(F(G)P)$ has finitely many conjugates in $GA$ and their product is a finite normal $A$-invariant subgroup. Passing to the quotient by this subgroup we can assume that $\g _{\infty}(F(G)P)=1$. This means that $[P,F_{p'}]=1$, where $F_{p'}$ is the Hall ${p'}$-subgroup of $F(G)$. Then also $[P^G,F_{p'}]=1$, where $P^G$ is the normal closure of $P$. Since $G=P^GF(G)$, it follows that $\g _{\infty}(G)=\g _{\infty}(P^G)$ and therefore $[\g _{\infty}(G), F_{p'}]=1$.

We repeat  this argument with another prime divisor $q$ of $|G/F(G)|$ and an $A$-invariant Sylow $q$-subgroup $Q$ of $G$, in addition assuming that $\g _{\infty}(F(G)Q)=1$. We obtain that  $\g _{\infty}(G)=\g _{\infty}(Q^G)$ and $[\g _{\infty}(G), F_{q'}]=1$.

Since $F(G)=F_{p'}F_{q'}$, it follows that $[\g _{\infty}(G), F(G)]=1$.

We also have
$$\g _{\infty}(G)=[\g _{\infty}(G),G]=[\g _{\infty}(G), \g _{\infty}(G)F(G)]=[\g _{\infty}(G),\g _{\infty}(G)].$$

As a result, $\g _{\infty}(G)\cap F(G)$ is contained in the centre of $\g _{\infty}(G)$. Since $\g _{\infty}(G)F(G)=G$, this means that $\g _{\infty}(G)\cap F(G)$ is isomorphic to a homomorphic image of the Schur multiplier of the finite group $G/F(G)$. Since the Schur multiplier of a finite group is finite \cite[Hauptsatz~V.23.5]{hup}, we obtain that $\gamma _{\infty}(G)\cap F(G)$ is finite, and so is $\gamma _{\infty}(G)$, as required.
\end{proof}

 \section*{Acknowledgements}
The first author was supported by the Mathematical Center in Akademgorodok, the agreement with Ministry of Science and High Education of the Russian Federation no.~075-15-2019-1613. The second author was supported by FAPDF and CNPq-Brazil.


\begin{thebibliography}{99}

    \bibitem{acc-khu-shu}    {C.~Acciarri, E.~I.~Khukhro and  P.~Shumyatsky},  Profinite groups with an automorphism whose fixed points are right Engel, \textit{Proc. Amer. Math. Soc.} \textbf{147}, no.~9 (2019), 3691--3703.

\bibitem{acc-shu16} C. Acciarri and P. Shumyatsky, Profinite groups and the fixed points of coprime automorphisms, \textit{J.~Algebra} \textbf{452} (2016), 188--195.

\bibitem{acc-shu-sil18} C. Acciarri, P. Shumyatsky, and D. S. Silveira, On groups with automorphisms whose fixed points are Engel,    \textit{Ann. Matem. Pura Appl.} \textbf{197} (2018), 307--316.

  \bibitem{acc-shu-sil19}  C. Acciarri, P. Shumyatsky, and D. Silveira,
Engel sinks of fixed points in finite groups, \textit{J.~Pure Appl. Algebra} \textbf{223}, no.~11 (2019), 4592--4601.

\bibitem{acc-sil18} C. Acciarri and D. Silveira,
Profinite groups and centralizers of coprime automorphisms whose elements are Engel, \textit{J.~Group Theory} \textbf{21}, no.~3 (2018), 485--509.

\bibitem{acc-sil20} C. Acciarri and D. Silveira, Engel-like conditions in fixed points of automorphisms of profinite groups, \textit{Ann. Matem. Pura Appl. (4)} \textbf{199}, no.~1 (2020), 187--197.

\bibitem{bah-zai} Yu. A. Bahturin and M. V. Zaicev, Identities of graded algebras, \textit{J.~Algebra} \textbf{205}, no.~1 (1998), 1--12.

\bibitem{bou} N. Bourbaki, Elements of Mathematics. Lie Groups
and Lie Algebras. Part I: Chapters 1--3, Hermann, Paris, Addison-Wesley, Reading, MA, 1975.

\bibitem{anal} J. D. Dixon,  M. P. F. du Sautoy, A. Mann, and  D. Segal,
\textit{Analytic pro-$p$ groups},  2nd ed., Cambridge Univ. Press, Cambridge, 1999.

\bibitem{gor}  D. Gorenstein,
\textit{Finite groups},  2nd ed., Chelsea, 1980.

\bibitem{hup} B. Huppert, \textit{Endliche Gruppen}. I,
Springer, Berlin, 1967.

\bibitem{kel} J. L. Kelley, \textit{General topology}, Grad. Texts in Math., vol.~27, Springer, New York,  1975.

     \bibitem{khu-shu99}    E. I. Khukhro and  P. Shumyatsky,  Bounding the exponent of a finite group with  automorphisms, \textit{J.~Algebra} \textbf{212}, no.~1 (1999),  363--374.

  \bibitem{khu-shu16}     {E.~I.~Khukhro and  P.~Shumyatsky},   Almost Engel finite and profinite groups, \textit{Int. J. Algebra Comput.}
{\bf  26}, no.~5 (2016), 973--983.

\bibitem{khu-shu18} E. I. Khukhro and P. Shumyatsky, Almost Engel compact groups, \emph{J.~Algebra} {\bf  500} (2018), 439--456.

         \bibitem{khu-shu18a}    {E.~I.~Khukhro and  P.~Shumyatsky},   Finite groups with Engel sinks of bounded rank, \textit{Glasgow Math.~J.} {\bf 60}, no.~3 (2018),   695--701.

     \bibitem{khu-shu19}    {E.~I.~Khukhro and  P.~Shumyatsky}, Compact groups   all elements of which are almost right Engel, \textit{Quart. J. Math} \textbf{70} (2019), 879--893.

     \bibitem{khu-shu20}    E. I. Khukhro and  P. Shumyatsky,  Compact groups with countable Engel sinks, \textit{Bull. Math. Sci.} \textbf{9}, no.~2 (2020) 2050015.

 \bibitem{khu-shu-tra}    {E.~I.~Khukhro,  P.~Shumyatsky, and G. Traustason}, Right Engel-type subgroups and length parameters of finite groups, \textit{J. Austral. Math. Soc.},   2019.


\bibitem{knu} D. E. Knuth, \textit{The Art of Computer Programming: Volume 1: Fundamental Algorithms}, 3rd ed.,  Addison-Wesley, Reading, MA, 1997.

             \bibitem{laz} M. Lazard, Groupes analytiques $p$-adiques, \textit{Publ. Math. Inst. Hautes \'Etudes
Sci.} \textbf{26} (1965), 389--603.

    \bibitem{lub-man2} A. Lubotzky and A. Mann,
Powerful p-groups. II: $p$-adic analytic groups, \textit{J.~Algebra} \textbf{105} (1987), 506--515.

\bibitem{rib-zal} L. Ribes and P. Zalesskii, \emph{Profinite groups},
Springer, Berlin, 2010.

\bibitem{rob} D. J. S. Robinson, A course in the theory of groups, Springer, New York, 1996.

    \bibitem{sha} A. Shalev, Polynomial identities in graded group rings, restricted Lie algebras and $p$-adic analytic groups, \textit{Trans. Amer. Math. Soc.} \textbf{337}, no.~1 (1993), 451--462.

\bibitem{war73} J. N. Ward, On groups admitting a noncyclic abelian automorphism group, \textit{Bull. Aust. Math. Soc.} \textbf{9} (1973), 363--366.

\bibitem{wil} J. S. Wilson, \emph{Profinite groups}, Clarendon Press, Oxford, 1998.

\bibitem{wi-ze} J. S. Wilson and E. I. Zelmanov, Identities for Lie algebras of pro-$p$ groups, \emph{J. Pure Appl. Algebra} {\bf 81}, no.~1 (1992), 103--109.


\bibitem{ze92} E. Zelmanov, Nil rings and periodic groups, \textit{Korean Math. Soc. Lecture
Notes in Math.}, Seoul, 1992.

\bibitem{ze95}    E. Zelmanov, Lie methods in the theory of nilpotent groups, in: \textit{Groups'\,93 Galaway/St Andrews}, Cambridge Univ. Press, Cambridge, 1995, 567--585.

\bibitem{ze17} E. Zelmanov,
Lie algebras and torsion groups with identity, \textit{J.~Comb. Algebra} \textbf{1}, no.~3 (2017), 289--340.

\end{thebibliography}
\end{document}